\theoremstyle{plain}
\newtheorem{theorem}{Theorem}[section]
\newtheorem{proposition}[theorem]{Proposition}
\newtheorem{lemma}[theorem]{Lemma}
\newtheorem{corollary}[theorem]{Corollary}
\theoremstyle{definition}
\newtheorem*{definition}{Definition}
\theoremstyle{remark}
\newtheorem{remark}[theorem]{Remark}
\newcommand{\rmd}{\mathrm{d}}
\begin{document}

\title{Curves orthogonal to a vector field in Euclidean spaces}


\author[Luiz C. B. da Silva]{Luiz C. B. da Silva}
\address[DA SILVA, L. C. B.]{Department of Physics of Complex Systems,\\ Weizmann Institute of Science, Rehovot 7610001, Israel}
\email{luiz.da-silva@weizmann.ac.il}  

\author[Gilson S. Ferreira Jr.]{Gilson S. Ferreira Jr. 
}
\address[FERREIRA JR., G. S.]{Department of Mathematics,\\
              Federal Rural University of Pernambuco, 52171-900, Recife-PE, Brazil}
\email{gilson.simoesj@ufrpe.br}
 
\subjclass[2010]{53A04, 53A05, 53C22} 
\keywords{Rectifying curve, geodesic, cone,  spherical curve, plane curve, slant helix}

\date{\today}

\maketitle

\begin{abstract}
A curve is rectifying if it lies on a moving hyperplane orthogonal to its curvature vector. In this work, we extend the main result of [Chen 2017, Tamkang J. Math. \textbf{48}, 209] to any space dimension: we prove that rectifying curves are geodesics on hypercones. We later use this association to characterize rectifying curves that are also slant helices in three-dimensional space as geodesics of circular cones. In addition, we consider curves that lie on a moving hyperplane normal to (i) one of the normal vector fields of the Frenet frame and to (ii) a rotation minimizing
vector field along the curve. The former class is characterized in terms of the constancy of a certain vector field normal to the curve, while the latter contains spherical and plane curves. Finally, we establish a formal mapping between rectifying curves in an $(m + 2)$-dimensional space and spherical curves in an $(m + 1)$-dimensional space. A curve is rectifying if it lies on a moving hyperplane orthogonal to its curvature vector. 
\end{abstract}

\maketitle

\section{Introduction}

In Euclidean space we may ask ``When does the position vector of a regular curve always lie orthogonal to a vector field?''. In other words, the problem consists in characterizing the curves $\alpha:I\to\mathbb{E}^{m+2}$ for which $\langle\alpha-p,\mathbf{V}\rangle=0$ in $I$, where $p$ is {constant} and $\mathbf{V}$ is a vector field along $\alpha$. Naturally, the answer will greatly depend on the properties of $\mathbf{V}$. For example, if $\alpha$ is a \emph{normal curve} (here $\mathbf{V}=\alpha'$), then the curve is spherical. On the other hand, if $\alpha$ is an \emph{osculating curve} (here $\mathbf{V}$ is the multinormal vector field, i.e., the last Frenet vector field from which we define the torsion \cite{Kuhnel2015}), then every osculating curve is a hyperplane curve. In the 2000s, Chen introduced the notion of a \emph{rectifying curve} in the three-dimensional (3d) Euclidean space by imposing that $\alpha$ always lies in its rectifying plane \cite{ChenMonthly2003}, i.e., it lies in the plane spanned by the tangent and binormal vectors. Rectifying curves have remarkable properties \cite{ChenBIMAS2005} and, in addition, they can be characterized as geodesics on a cone \cite{ChenTJM2017}. The notion of rectifying curves may be extended to higher dimensional Euclidean spaces \cite{CambieTJM2016,IlarslanTJM2008} by requiring $\alpha$ to lie in the (moving) hyperplane normal to its \emph{curvature vector} $\mathbf{k}=\kappa\, \mathbf{T}'/\Vert\mathbf{T}'\Vert$, where $\mathbf{T}=\alpha'/\Vert\alpha'\Vert$ is the unit tangent and $\kappa$ is the \emph{curvature function} of $\alpha$. Naturally, we can also consider curves orthogonal to one of the remaining vector fields of the Frenet frame (a problem originally proposed by Cambie et al. \cite{CambieTJM2016}) or, more generally, curves orthogonal to vector fields coming from frames distinct of Frenet, such as the so-called rotation minimizing (RM) frames \cite{BishopMonthly1975} (a problem investigated in 3d space by the first named author \cite{DaSilvaArXiv2017}). In addition, an equation relating the curvatures and torsion characterizing these special classes of curves has been obtained for rectifying curves, first in dimensions 3 and 4, \cite{ChenMonthly2003,KimHMJ1993} and \cite{IlarslanTJM2008}, respectively, and latter generalized to any dimension  \cite{CambieTJM2016}.

In this work, we extend the main result of Chen \cite{ChenTJM2017} to any space dimension: we prove that rectifying curves are geodesics on {hypercones} (Theorem \ref{ThrRecCurvAsConeGeod}). We later use this relation with {hypercones} to present a characterization of curves that are simultaneously rectifying and slant helices, i.e., curves whose curvature vector makes a constant angle with a fixed direction (a problem raised in 3d space by Deshmukh et al. \cite{AlghanemiFilomat2019}). Indeed, we show that in dimension 3 these curves are characterized as geodesics of circular cones (Theorem \ref{thr::SlantAndRectfCurves}). In higher dimensions, we {show} that {geodesics} of circular hypercones are slant helices (Corollary \ref{cor::SlantRectAsGeodCircCone}). In addition, we also consider curves that lie on a moving hyperplane normal to the $j$-th vector field of the Frenet frame and characterize them in terms of the constancy of a certain vector field normal to the curve, namely, the projection of the curve on the hyperplane spanned by the $(j+1)$-th, $(j+2)$-th, $\dots$, and multinormal vector fields of the Frenet frame (Theorem \ref{Theo::CharjRectCurves}). Later, by investigating the behavior of the coordinates of the curve with respect to a given orthonormal moving frame {(Lemma \ref{lemmaFrenetSystemForCoord})}, we establish a formal mapping between spherical and rectifying curves (Theorem \ref{TheoMapSpherCurveInRectCurve}). Finally, we characterize spherical and plane curves as those curves whose position vector is orthogonal to a rotation minimizing vector field (Theorem \ref{ThrCharRMrectifying}).
 
The remaining of this work is divided as follows. In Sect. \ref{secRectifyingCurves}, we study rectifying curves in Euclidean spaces. In Sect. \ref{sect::RectCurvAndSlntHlx}, we characterize those rectifying curves that are also slant helices. In Sect. \ref{sectJ-rect}, we investigate curves normal to a Frenet vector field. In Sect. \ref{sectMapSphrclAndRectfyngCrv}, we establish a map between spherical and rectifying curves and, finally, in Sect. \ref{secRMrectifyingCurves}, we consider curves normal to a rotation minimizing vector field.

\section{Rectifying curves in Euclidean spaces}
\label{secRectifyingCurves}

In this section, we generalize the main result of Chen \cite{ChenTJM2017} and show that rectifying curves in $\mathbb{E}^{m+2}$ are geodesics on {hypercones}. This characterization follows 
as a consequence of  Theorem \ref{thr::CharRectCurvesUsingTangentialComponent}, which is a generalization of Theorems 1 and 2 of \cite{ChenMonthly2003}. Such extensions already appeared in \cite{IlarslanTJM2008} for dimension 4 and in \cite{CambieTJM2016} for any dimension. The attentive reader will note that our proofs are similar to those of \cite{CambieTJM2016,ChenMonthly2003,IlarslanTJM2008}, but we included them here for the sake of completeness.  

Let $\alpha:I\to \mathbb{E}^{m+2}$ be a regular $C^2$ curve parameterized by the arc-length $s$, i.e., for all $s\in I$, $\langle\mathbf{T}(s),\mathbf{T}(s)\rangle=1$, where $\mathbf{T}(s)=\alpha'(s)$. We say that a $C^2$ regular curve $\alpha$ is \emph{rectifying} with vertex $p$ if $\langle\alpha(s)-p,\mathbf{k}(s)\rangle=0,$ where $\mathbf{k}(s)=\alpha''(s)$ is the curvature vector of $\alpha$ and $p$ is constant.

\begin{theorem}\label{thr::CharRectCurvesUsingTangentialComponent}
The following conditions are equivalent:
\begin{enumerate}
    \item The curve $\alpha(s)$ is rectifying.
    \item There exist constants $b$ and $c\in\mathbb{R}$ such that $\langle\alpha(s)-p,\mathbf{T}(s)\rangle=s+b$ and $\rho(s)\equiv\Vert\alpha(s)-p\Vert=\sqrt{s^2+2bs+c}$.
    \item There exist a reparameterization $t=t(s)$ and a unit velocity spherical curve $\beta:J\to\mathbb{S}^{m+1}(p,1)$ such that
    $$
        \alpha(t)=(a\sec t)\beta(t),
    $$
    where $a\in\mathbb{R}$ is a positive constant. (Note that $t$ is the arc-length of $\beta$.)
    \item The normal component of $\alpha(s)-p$ has constant length and $\rho(s)$ is a non-constant function. 
\end{enumerate}
\end{theorem}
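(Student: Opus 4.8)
The plan is to fix $p=0$ (a harmless translation) and extract everything from two elementary identities. Writing $\mathbf{k}=\mathbf{T}'$ and setting $f(s):=\langle\alpha(s),\mathbf{T}(s)\rangle$, differentiation gives
\[
f'(s)=\langle\mathbf{T},\mathbf{T}\rangle+\langle\alpha,\mathbf{T}'\rangle=1+\langle\alpha,\mathbf{k}\rangle,\qquad (\rho^2)'=2\langle\alpha,\mathbf{T}\rangle=2f.
\]
With these in hand I would establish $(1)\Leftrightarrow(2)$, $(2)\Leftrightarrow(4)$, and $(2)\Leftrightarrow(3)$ separately, which closes the loop among all four conditions.

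For $(1)\Leftrightarrow(2)$, note that $\alpha$ being rectifying means $\langle\alpha,\mathbf{k}\rangle\equiv0$, which by the first identity is exactly $f'\equiv1$, i.e. $f(s)=s+b$ for a constant $b$; integrating the second identity then yields $\rho^2=s^2+2bs+c$, so $\rho=\sqrt{s^2+2bs+c}$, and the converse simply reverses these integrations. For $(2)\Leftrightarrow(4)$, I would decompose $\alpha=f\mathbf{T}+\alpha^\perp$ so that $\Vert\alpha^\perp\Vert^2=\rho^2-f^2$, and differentiate using both identities to obtain $\tfrac{d}{ds}\Vert\alpha^\perp\Vert^2=2f-2ff'=-2f\langle\alpha,\mathbf{k}\rangle$. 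If $\alpha$ is rectifying this vanishes and $\Vert\alpha^\perp\Vert^2=c-b^2$ is constant while $\rho$ is visibly non-constant, giving (4). Conversely, constancy of $\Vert\alpha^\perp\Vert$ forces $f\langle\alpha,\mathbf{k}\rangle\equiv0$, so on the open set where $f\neq0$ one has $\langle\alpha,\mathbf{k}\rangle=0$; I would then invoke continuity together with the non-constancy of $\rho$ to rule out the complementary set where $f\equiv0$ (on any such interval $\rho$ would be locally constant). Propagating rectifiability across the zero set of the tangential component $f$ is the one place where genuine care is needed, and I expect it to be the main obstacle.

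For $(2)\Leftrightarrow(3)$ I would set $a:=\sqrt{c-b^2}=\Vert\alpha^\perp\Vert>0$ and introduce the reparameterization $s+b=a\tan t$, so that $\rho^2=(s+b)^2+a^2=a^2\sec^2 t$ and hence $\rho=a\sec t$. Writing $\alpha=\rho\,\gamma$ with $\gamma:=\alpha/\rho$ a curve on $\mathbb{S}^{m+1}(0,1)$, the orthogonality $\langle\gamma,\gamma'\rangle=0$ gives $1=(\rho')^2+\rho^2\Vert\gamma'\Vert^2$; since $\rho'=(s+b)/\rho=\sin t$ this forces $\Vert\gamma'\Vert=\cos^2 t/a$, and combining with $ds/dt=a\sec^2 t$ yields $\Vert d\gamma/dt\Vert=1$. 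Thus $\beta:=\gamma$ is a unit-speed spherical curve and $\alpha(t)=(a\sec t)\beta(t)$. The converse is a direct computation: differentiating $\alpha=(a\sec t)\beta$ under $\Vert\beta\Vert=\Vert\beta'\Vert=1$ and $\langle\beta,\beta'\rangle=0$ recovers $\langle\alpha,\mathbf{T}\rangle=a\tan t=s+b$ and $\rho=a\sec t$, which is precisely condition (2).
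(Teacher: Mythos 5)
Your proposal follows essentially the same route as the paper: the same two identities $f'=1+\langle\alpha-p,\mathbf{k}\rangle$ and $(\rho^2)'=2f$, the same tangential/normal decomposition for item (4), and the same substitution $s+b=a\tan t$ producing $\rho=a\sec t$ for item (3); only the grouping of the implications differs, which is immaterial.

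The one point you flag as an obstacle --- propagating $\langle\alpha-p,\mathbf{k}\rangle=0$ across the zero set of $f$ in the implication $(4)\Rightarrow(1)$ --- is real, and your parenthetical remedy does not quite close it: on a subinterval where $f\equiv0$ the function $\rho$ is only \emph{locally} constant, which does not contradict the global non-constancy assumed in (4). (The paper has the same soft spot: it asserts that $\rho$ non-constant implies $f\neq0$, when it literally only gives $f\not\equiv0$.) The gap closes with a connectedness argument. Set $g:=\langle\alpha-p,\mathbf{k}\rangle=f'-1$, which is continuous since $\alpha$ is $C^2$, and let $V=\{s\in I: g(s)\neq0\}$. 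On the open set $V$ the identity $fg\equiv0$ forces $f\equiv0$, hence $f'\equiv0$ and so $g\equiv-1$ there; consequently $V=g^{-1}(\{-1\})$, which is closed, while $V$ is open by continuity. Since $I$ is connected, either $V=I$, in which case $f\equiv0$ and $(\rho^2)'=2f\equiv0$ contradicts the non-constancy of $\rho$, or $V=\emptyset$, i.e.\ $\alpha$ is rectifying. With this inserted, your argument is complete.
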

\begin{proof}
$(1)\Leftrightarrow(2)$: Taking the derivative of $\langle\alpha(s)-p,\mathbf{T}(s)\rangle$ and using the definition of rectifying curves give
\begin{equation}
    \langle\alpha(s)-p,\mathbf{T}(s)\rangle' = \langle\mathbf{T}(s),\mathbf{T}(s)\rangle+\langle\alpha(s)-p,\mathbf{k}(s)\rangle=1.
\end{equation}
Thus, we conclude that $\langle\alpha(s)-p,\mathbf{T}(s)\rangle=s+b$ for some constant $b$. In addition,
\begin{equation}
    (\rho^2)'(s)=2\langle\alpha(s)-p,\mathbf{T}(s)\rangle=2s+2b\Rightarrow \exists\,c\in\mathbb{R}: \rho^2(s)=s^2+2bs+c.
\end{equation}

Conversely, if $\langle\alpha(s)-p,\alpha(s)-p\rangle=s^2+2bs+c$, then taking the derivative twice gives $1+\langle\alpha(s)-p,\mathbf{T}'(s)\rangle=1$, which implies $\langle\alpha(s)-p,\mathbf{T}'(s)\rangle=0$, i.e., $\alpha$ is a rectifying curve.  
\newline
$(2)\Leftrightarrow(3)$: First,  we write $\rho^2=(s+b)^2+a^2$, where $a^2=c-b^2>0$ (note $\rho^2>0$). Translating $s$, we may simply write $\rho^2=s^2+a^2$ and $\langle\alpha-p,\mathbf{T}\rangle=s$. Let us define the spherical curve $\beta(s)=\frac{1}{\rho(s)}(\alpha(s)-p)$. Then,
\begin{equation}
    \alpha(s)-p=\sqrt{s^2+a^2}\,\beta(s)\Rightarrow \mathbf{T}(s)=\frac{s}{\sqrt{s^2+a^2}}\beta(s)+\sqrt{s^2+a^2}\,\beta'(s).\label{eqDefSphrCurvFromRectifyinCurv}
\end{equation}
Since $\langle\beta,\beta'\rangle=0$, we deduce that $\Vert\beta'(s)\Vert=\frac{a}{s^2+a^2}$. The arc-length, $t$, of  $\beta$ is
\begin{equation}
    t = \int_0^s\frac{a}{u^2+a^2}\rmd u = \arctan\left(\frac{s}{a}\right)\Rightarrow s = a\tan t.
\end{equation}
Finally, substitution in Eq. \eqref{eqDefSphrCurvFromRectifyinCurv} leads to the desired result: $\alpha(t)-p=(a\sec t)\beta(t)$.

Conversely, if we have $\alpha(t)-p=(a\sec t)\beta(t)$, then it follows that $\alpha'(t)=(a\sec t)[\tan(t)\beta(t)+\beta'(t)]$. The arc-length parameter of $\alpha$ is $s=\int\Vert\alpha'(t)\Vert\rmd t=a\int\sec^2t\,\rmd t=a\tan t$. Finally,  $\rho^2(s)=\langle\alpha(s)-p,\alpha(s)-p\rangle=a^2\sec^2t=s^2+a^2$, which gives (2).
\newline
$(1)\Rightarrow(4)$: We can assume that (2) and (3) are valid [they are a consequence of (1)], from which follows that $\rho^2=\langle \alpha(t)-p,\alpha(t)-p\rangle=s^2+a^2=a^2\sec^2t$, $\langle\alpha-p,\mathbf{T}\rangle=s=a\tan t$, and 
\begin{equation}
 \alpha'(t)=(a\sec t)[\tan(t)\beta(t)+\beta'(t)]\Rightarrow \Vert \alpha'(t)\Vert=a\sec^2t.   
\end{equation}
The normal component $\alpha^N$ of $\alpha(t)-p=(a\sec t)\beta(t)$ is
\begin{equation}
\alpha^N(t)=(\alpha(t)-p)-\frac{\langle\alpha(t)-p,\alpha'(t)\rangle}{\Vert\alpha'(t)\Vert^2}\,\alpha'(t),
\end{equation}
which finally implies
\begin{eqnarray}
\langle\alpha^N(t),\alpha^N(t)\rangle &=& \langle\alpha(t)-p,\alpha(t)-p\rangle-\frac{\langle\alpha(t)-p,\alpha'(t)\rangle^2}{\Vert\alpha'(t)\Vert^2} \nonumber\\
 &=& \langle\alpha(t)-p,\alpha(t)-p\rangle-\langle\alpha(t)-p,\mathbf{T}(t)\rangle^2 \nonumber\\
& = & a^2\sec^2t-a^2\tan^2t=a^2.
\end{eqnarray}
$(4)\Rightarrow(1)$: Writing $\alpha-p=\langle\alpha-p,\mathbf{T}\rangle\,\mathbf{T}+\alpha^N$, where $\langle\alpha^N,\mathbf{T}\rangle=0$, and $C=\langle\alpha^N,\alpha^N\rangle$ constant, it follows
\begin{equation}
    \langle\alpha-p,\alpha-p\rangle = \langle\alpha-p,\mathbf{T}\rangle^2+\langle\alpha^N,\alpha^N\rangle=\langle\alpha-p,\mathbf{T}\rangle^2+C.
\end{equation}
Taking the derivative,
\begin{equation}
    2\langle\alpha-p,\mathbf{T}\rangle = 2\langle\alpha-p,\mathbf{T}\rangle\Big(\langle \mathbf{T},\mathbf{T}\rangle+\langle\alpha-p,\mathbf{T}'\rangle\Big)\Rightarrow 1=1+\langle\alpha-p,\mathbf{k}\rangle,
\end{equation}
where {we} used that $\rho$ non-constant implies $\langle\alpha-p,\mathbf{T}\rangle\not=0$. Finally, we deduce that $\langle\alpha-p,\mathbf{T}'\rangle=0$, i.e., $\alpha$ is a rectifying curve. 
\end{proof}

A {hypercone} $\mathcal{C}^{m+1}(p)$ in $\mathbb{E}^{m+2}$ with vertex at $p$ can be parameterized in terms of a spherical submanifold as
\begin{equation}\label{eq::SphericalRepOfCones}
    C_{\beta}(t_1,\dots,t_{m},u) = u\beta(t_1,\dots,t_{m}),
\end{equation}
where $\beta:(t_1,\dots,t_{m})\mapsto\mathbb{S}^{m+1}(p,1)$ is a regular hypersurface. For a given point $\mathbf{t}_0=(t_1,\dots,t_{m})$, the straight lines $c(t)=C_{\beta}(\mathbf{t}_0,t)$ are geodesics of the {hypercone}, these are the so-called \emph{rulings}. If $\beta$ parameterizes a great sphere, i.e., the intersection of $\mathbb{S}^{m+1}(p,1)$ with a hyperplane passing through $p$, the corresponding {hypercone} is just a hyperplane, whose geodesics are all straight lines. Thus, in the following we assume that this is not the case. The next theorem characterizes the remaining geodesics on {a hypercone} as rectifying curves and generalizes the main result in Ref. \cite{ChenTJM2017}. In fact, we have the

\begin{theorem}\label{ThrRecCurvAsConeGeod}
A regular $C^2$ curve $\alpha:I\to\mathbb{E}^{m+2}$ is rectifying with vertex $p$ if and only if it is a geodesic on a {hypercone} $\mathcal{C}^{m+1}(p)$ which is not a ruling.
\end{theorem}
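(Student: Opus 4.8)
The plan is to prove both implications separately, doing the converse first because it is essentially immediate, and reserving the bulk of the work for constructing a suitable cone in the forward direction.

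\emph{Converse (geodesic $\Rightarrow$ rectifying).} Suppose $\alpha$ is a geodesic of a cone $\mathcal{C}^{m+1}(p)$ written as in Eq. \eqref{eq::SphericalRepOfCones}, $C_\beta(\mathbf{t},u)=p+u\,\beta(\mathbf{t})$, and write $\alpha(s)-p=u(s)\,\beta(\mathbf{t}(s))$ along the curve, with $s$ the arc length. The ruling direction $\partial_u C_\beta=\beta$ is everywhere tangent to the cone, and $\alpha-p=u\beta$ is parallel to it. Because $\alpha$ is a geodesic, its acceleration $\mathbf{k}=\alpha''$ is normal to the cone, so in particular $\langle\mathbf{k},\beta\rangle=0$; scaling by $u$ gives $\langle\alpha-p,\mathbf{k}\rangle=0$, which is exactly the rectifying condition. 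This step uses nothing beyond the standing hypotheses.

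\emph{Forward direction (rectifying $\Rightarrow$ cone geodesic).} Assume $\alpha$ is rectifying. By Theorem \ref{thr::CharRectCurvesUsingTangentialComponent}, after translating $s$ we may write $\alpha(s)-p=\rho(s)\,\gamma(s)$ with $\rho(s)=\sqrt{s^2+a^2}$ and $\gamma(s)=(\alpha(s)-p)/\rho(s)$ a curve on $\mathbb{S}^{m+1}(p,1)$, so that $\langle\gamma,\gamma\rangle=1$ and $\langle\gamma,\gamma'\rangle=0$. Differentiating twice and using $\rho'=s/\rho$ shows $\mathbf{k}=\alpha''\in\mathrm{span}\{\gamma,\gamma',\gamma''\}$, and a short computation (with $\langle\gamma,\gamma''\rangle=-\langle\gamma',\gamma'\rangle=-a^2/\rho^4$ and the analogous identity for $\langle\gamma',\gamma''\rangle$) gives $\langle\mathbf{k},\gamma\rangle=0$ and $\langle\mathbf{k},\gamma'\rangle=0$. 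Hence $\mathbf{k}$ is parallel to the part of $\gamma''$ orthogonal to $\gamma$ and $\gamma'$, i.e., to the spherical principal normal $N_\gamma$ of $\gamma$.

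It then remains to produce the cone. I would take the cone over an $m$-dimensional spherical submanifold $\Sigma\subset\mathbb{S}^{m+1}(p,1)$ containing $\gamma$ whose unit normal in the sphere coincides with $N_\gamma$ along $\gamma$: pick a smooth orthonormal frame $\gamma',N_\gamma,e_3,\dots,e_{m+1}$ of $T_\gamma\mathbb{S}^{m+1}$ and let $\Sigma$ be swept out from $\gamma$ by the spherical exponential map in the directions $e_3,\dots,e_{m+1}$, so that $T_{\gamma(s)}\Sigma=\mathrm{span}\{\gamma',e_3,\dots,e_{m+1}\}$ with sphere-normal $N_\gamma$. Parameterizing the cone as $C_\Sigma(s,x_3,\dots,x_{m+1},u)=p+u\,\Sigma(s,x_3,\dots,x_{m+1})$ places $\alpha(s)-p=\rho(s)\gamma(s)$ on it (at $x_j=0$, $u=\rho$) as a non-ruling curve, and its cone tangent space along $\alpha$ is $\mathrm{span}\{\gamma,\gamma',e_3,\dots,e_{m+1}\}$; since $N_\gamma$ is orthogonal to each of these frame vectors and $\mathbf{k}\parallel N_\gamma$, the acceleration is normal to the cone and $\alpha$ is a geodesic.

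\emph{Main obstacle.} The genuinely new point compared with the three-dimensional case of \cite{ChenTJM2017}---where the cone over the one-dimensional curve $\gamma$ is already a surface and no extension is needed---is the construction of the spherical hypersurface $\Sigma$ with prescribed normal $N_\gamma$, together with verifying that it is a bona fide submanifold having $\gamma$ as a coordinate curve. I expect the delicate part to be the locus where the geodesic curvature of $\gamma$ vanishes, so that $N_\gamma$ is undefined; there one checks $\mathbf{k}=0$, and such a locus corresponds to $\alpha$ degenerating to a ruling, which is handled by continuity or excluded by the hypothesis that the geodesic is not a ruling.
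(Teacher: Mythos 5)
Your proof is correct and follows essentially the same route as the paper: your converse coincides with the paper's alternative proof (the ruling direction $\alpha-p$ is tangent to the cone, so a geodesic's acceleration satisfies $\langle\mathbf{k},\alpha-p\rangle=0$), and your forward direction reproduces the paper's strategy of showing $\alpha''$ is normal to the 2-cone over $\alpha$ and then thickening that 2-cone to a hypercone by adjoining $m-1$ normal directions orthogonal to $\alpha''$ (the paper uses the affine offsets $\alpha(s_1)+\sum_i s_i V_i(s_1)$ where you use the spherical exponential map, an immaterial difference). The only caveats are that the paper glosses over the same smoothness issue you flag at points where $\mathbf{k}=0$, and that such points are merely isolated inflections where the geodesic condition holds vacuously --- they do not make $\alpha$ ``degenerate to a ruling'' as your closing remark suggests.
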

\begin{proof}
Let $\alpha(t)=u(t)\beta(t_1(t),\dots,t_{m}(t))\equiv u(t)\beta(t)$ be a geodesic on $\mathcal{C}^{m+1}(p)$ with $\beta(t)\in\mathbb{S}^{m+1}(p,1)$ a unit speed curve. (Note, $\alpha$ is not a ruling.) We have $\alpha'(t)=u'(t)\beta(t)+u(t)\beta'(t)$ and, therefore, the length functional of $\alpha$, which is a function of $t,u$, and $u'$ only, is given by
$L(t,u,u') =\int E\,\rmd t= \int\sqrt{u^2+u'^2}\,\rmd t$. The corresponding Euler-Lagrange equation is
\begin{equation}
    \frac{\partial E}{\partial u}-\frac{\rmd}{\rmd t}\frac{\partial E}{\partial u'}=0\Rightarrow uu''-2u'\,^2-u^2=0. 
\end{equation}

The general solution is of the form $u(t)=a\sec(t+b)$ for some constants $a,b\in\mathbb{R}$. Indeed, defining $v(u)=\rmd u/\rmd t$ leads to $uv(u)v'(u)-2v(u)^2-u^2=0$ and, dividing by $u/2$, $2v(u)v'(u)-4v(u)^2/u=2u$. We may now define $w=v^2$ and, therefore, $w'(u)-4w(u)/u=2u$. Multiplying this equation by $\mu=1/u^4$ (integrating factor), we have $(w/u^4)'(u)=2/u^3=-(1/u^2)'$. Then, there exists a constant $c$, such that $c^2=w/u^4+1/u^2\Rightarrow c^2\,u^4=v^2+u^2=u'(t)^2+u^2$ or, equivalently,  $(c\,u)^4=(c\,u')^2+(c\,u)^2$, whose general solution is a secant function. Then, every geodesic of a {hypercone} is a rectifying curve.

Conversely, from Theorem \ref{thr::CharRectCurvesUsingTangentialComponent}, it follows that a rectifying curve can be written as $\alpha(t)=a\sec(t)\beta(t)$, where $\beta:I\to \mathbb{S}^{m+1}(p,1)$ is a unit speed curve.  Using the reasoning above, $\alpha$ is a geodesic of the 2-cone $\Sigma^2:(u,s)\mapsto p+u(\alpha(s)-p)$. Thus, $\alpha''(s)$ is orthogonal to $T_{\alpha(s)}\Sigma^2$. Now, let $V_2(s),\dots,V_{m}(s)$ be unit vector fields orthogonal to $\alpha''$ and $\Sigma^2$. We may use these vector fields to build a hypercone such that $\alpha$ is a geodesic in it. Indeed, define
\begin{equation}
\mathcal{C}^{m+1}(p):(u,s_1,\dots,s_m)\mapsto p+u\,[\alpha(s_1)+\sum_{i=2}^ms_iV_i(s_1)-p].    
\end{equation}
Note that $\alpha(s)$ has coordinates $(u,s_1,\dots,s_m)=(1,s,0,\dots,0)$. Then, the tangent vectors along $\alpha$ are 
\begin{equation}
\left\{
\begin{array}{ccl}
     \partial_u\vert_{\alpha} &=& \Big(\alpha(s_1)+\sum_{j=2}^{m}s_jV_{j}(s_1)-p\Big)\Big\vert_{\alpha(s)}=\alpha(s)-p \\[5pt]
     \partial_{s_1}\vert_{\alpha} &=& \Big(u\alpha'(s_1)+u\sum_{j=2}^{m}s_jV_{j}'(s_1)\Big)\Big\vert_{\alpha(s)}=\alpha'(s) \\[5pt]
     \partial_{s_j}\vert_{\alpha} &=& V_{j}(s),\,j\in\{2,\dots,m\}\\
\end{array}
\right..
\end{equation}
By construction, $\alpha''$ {is orthogonal to} $\partial_{s_j}\vert_{\alpha}$ {for all $j\in\{2,\dots,m\}$} and, since $\alpha$ is rectifying and parameterized by arc-length, we also have that $\alpha''$ is orthogonal to $\partial_{u}\vert_{\alpha}$ and  $\partial_{s_1}\vert_{\alpha}$. Therefore, $\alpha''$ is parallel to the normal of $\mathcal{C}^{m+1}(p)$ and, consequently, $\alpha$ is a geodesic.
\end{proof}

We now provide an alternative proof for the characterization of geodesics on a {hypercone}. The strategy of the proof is similar to that found in the study of rectifying curves in the 3d sphere and hyperbolic space \cite{LucasJMAA2015,LucasMJM2016}.

\begin{proof}[Alternative proof of Theorem \ref{ThrRecCurvAsConeGeod}.] Given $\alpha:I\to\mathcal{C}^{m+1}(p)$, it follows by definition of a {hypercone} that the straight line $X_s(u)=p+u(\alpha(s)-p)$ is a curve in $\mathcal{C}^{m+1}(p)$ for every $s\in I$. Consequently, the velocity vector  $X_s'(u)=\alpha(s)-p$ is tangent to $\mathcal{C}^{m+1}(p)$. Thus, if $\alpha$ is a geodesic, we must have $\langle\alpha'',\alpha-p\rangle=0$, i.e., $\alpha$ is a rectifying curve. 

Conversely, let $\alpha$ be a rectifying curve centered at $p$. Now, consider the 2-cone $\Sigma^2:X(u,s)=p+u(\alpha(s)-p)$, $u\not=0$ and $s\in I$. By hypothesis, $\partial_uX=\alpha-p$ is orthogonal to $\alpha''$. On the other hand, since $\langle\alpha',\alpha'\rangle=1\Rightarrow\langle\alpha',\alpha''\rangle=0$ and $\alpha'=\frac{1}{u}\partial_sX$, we have $\langle\partial_sX,\alpha''\rangle=0$. Thus, we conclude that $\alpha''$ is normal to $\Sigma^2$, $\alpha''\in\Gamma(N\Sigma^2)$. Therefore, $\alpha$ is a geodesic of the 2-cone $\Sigma^2$. To conclude the proof, i.e., show that $\alpha$ is a geodesic of a hypercone, we may employ the same strategy used in the end of the first proof of Theorem \ref{ThrRecCurvAsConeGeod}. 
\end{proof}

\begin{remark}
A careful examination of the proofs of Theorem \ref{ThrRecCurvAsConeGeod} reveals that the {hypercone} containing a rectifying curve may be not unique. (Uniqueness is only assured for 2-cones.) Then, it would be interesting to ask whether the cones sharing a common rectifying curve have any special geometric property. 
\end{remark}

\section{Rectifying curves and slant helices}
\label{sect::RectCurvAndSlntHlx}

A curve is a \emph{slant helix} if its curvature vector makes a constant angle with a fixed direction \cite{IzumiyaTJM2004}. In this section, we are interested in characterizing those rectifying curves that are also slant helices. We mention that a characterization of rectifying slant helices in terms of their curvature and torsion was established by Altunkaya et al. \cite{AltunkayaKJM2016}. (This problem has been {also} recently raised by Deshmukh et al. \cite{AlghanemiFilomat2019}.) Here, we are going to provide a geometric answer to this question in terms of the spherical curve associated with the cone that contains a given rectifying curve as a geodesic. The strategy will consist in taking into account that \emph{constant angle surfaces} (also known as \emph{helix surfaces}), i.e., surfaces whose unit normal makes a constant angle with a fixed direction, are characterized by the fact their geodesics are slant helices \cite{LucasBBMS2016}. Then, we may characterize curves that are simultaneously rectifying and slant helices by determining the cones of constant angle (Proposition \ref{prop::HelixConesAreCircular}) which finally leads to the characterization of rectifying slant helices as geodesics of circular cones (Theorem \ref{thr::SlantAndRectfCurves}). In higher dimensions we show that geodesics of circular hypercones are slant helices (Corollary \ref{cor::SlantRectAsGeodCircCone}) while the converse remains an open problem.

\begin{proposition}\label{prop::HelixConesAreCircular}
A cone $\mathcal{C}_{\beta}^2(p)\subset\mathbb{E}^3$ makes a constant angle with a fixed direction if and only if it is circular. In addition, the fixed direction coincides with the axis of the circular cone.
\end{proposition}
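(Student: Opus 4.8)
The plan is to reduce the statement to a computation with the spherical Darboux frame of the base curve. After translating the vertex to the origin (so $p=0$), write the cone as $C_\beta(t,u)=u\,\beta(t)$ with $\beta\colon J\to\mathbb{S}^2(0,1)$ a unit-speed curve. First I would compute the tangent frame $C_u=\beta$, $C_t=u\beta'$, note that $\langle\beta,\beta'\rangle=0$ and $\|\beta'\|=1$, and conclude that the unit normal is $\mathbf{N}=\beta\times\beta'$, which depends only on $t$ (it is constant along each ruling). Hence ``the cone makes a constant angle with a fixed direction $\mathbf{d}$'' means exactly that $\langle\beta\times\beta',\mathbf{d}\rangle$ is constant in $t$.

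Next I would introduce the orthonormal frame $\mathbf{e}_1=\beta$, $\mathbf{e}_2=\beta'$, $\mathbf{e}_3=\beta\times\beta'$ and derive its structure equations $\mathbf{e}_1'=\mathbf{e}_2$, $\mathbf{e}_2'=-\mathbf{e}_1+\kappa_g\,\mathbf{e}_3$, $\mathbf{e}_3'=-\kappa_g\,\mathbf{e}_2$, where $\kappa_g$ is the geodesic curvature of $\beta$ on the sphere. Writing the fixed direction in this frame, $\mathbf{d}=d_1\mathbf{e}_1+d_2\mathbf{e}_2+d_3\mathbf{e}_3$ with $d_i=\langle\mathbf{d},\mathbf{e}_i\rangle$, the condition $\mathbf{d}'=0$ yields the system $d_1'=d_2$, $d_2'+d_1-\kappa_g d_3=0$, $d_3'+\kappa_g d_2=0$. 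The constant-angle hypothesis is $d_3=\cos\theta$ constant, so $d_3'=0$ forces $\kappa_g d_2=0$. For a genuine cone ($\kappa_g\not\equiv0$; the case $\kappa_g\equiv0$ is the excluded great-sphere/plane) this gives $d_2=0$, whence $d_1'=0$ and $d_1=\kappa_g\cos\theta$. Since $d_1$ is constant and $\cos\theta\neq0$ (if $\cos\theta=0$ then $d_1=d_2=d_3=0$, contradicting $\mathbf{d}\neq0$), I conclude $\kappa_g$ is constant. A spherical curve of constant geodesic curvature is a circle, so $\beta$ is a small circle and the cone is circular. Moreover $\langle\beta,\mathbf{d}\rangle=d_1$ is constant, meaning $\beta$ sits at constant spherical distance from $\mathbf{d}$; this identifies $\mathbf{d}$ as the axis.

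For the converse I would simply run the computation in reverse: if $\kappa_g$ is constant, the vector $\mathbf{d}=\kappa_g\,\mathbf{e}_1+\mathbf{e}_3=\kappa_g\,\beta+\beta\times\beta'$ satisfies $\mathbf{d}'=0$ by the structure equations, hence is a fixed direction, and $\langle\mathbf{N},\mathbf{d}\rangle=\langle\mathbf{e}_3,\mathbf{d}\rangle=1$ is constant (equivalently $\cos\theta=1/\sqrt{\kappa_g^2+1}$ after normalization), with $\mathbf{d}$ the cone's axis. I expect the main obstacle to be purely organizational rather than deep: correctly establishing the spherical Darboux equations with the right signs, and cleanly dispatching the degenerate cases $\kappa_g\equiv0$ (plane) and $\cos\theta=0$ so that the implication $\kappa_g=\text{const}$ is forced, together with verifying that the constant vector $\mathbf{d}$ produced by the computation is genuinely the geometric symmetry axis of the resulting circular cone.
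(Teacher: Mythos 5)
Your argument is correct and follows essentially the same route as the paper's: both work with the spherical frame $\{\beta,\beta',\beta\times\beta'\}$, identify the cone normal as $\beta\times\beta'$ (constant along rulings), and differentiate the angle function $\langle\beta\times\beta',\mathbf{d}\rangle$ using $\xi'=-\kappa_g\beta'$. Your version merely carries a little more machinery — decomposing $\mathbf{d}$ fully in the moving frame and passing through the constancy of $\kappa_g$ — where the paper goes directly from $\kappa_g\langle\beta',\mathbf{d}\rangle=0$ to $\langle\beta,\mathbf{d}\rangle=\mathrm{const}$; both share the same (harmless, standard) looseness in splitting the pointwise product condition into the cases $\kappa_g\equiv 0$ versus $\langle\beta',\mathbf{d}\rangle\equiv 0$.
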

\begin{proof}
The unit normal of $\mathcal{C}_{\beta}^2$ along $\beta$ is the normal to $\beta$ with respect to the unit sphere, namely $\xi\vert_{\beta}=\beta\times\beta'$. On the remaining points of $\mathcal{C}_{\beta}^2$, the normal is obtained through parallel transport along the rulings. In addition, the Frenet equations of $\beta$ on the sphere are $\nabla_{\beta'}\beta'=\kappa_g\xi$ and $\xi'=-\kappa_g\beta'$, where $\nabla_{\beta'}\beta'\equiv\mbox{Proj}_{T_{\beta}\mathbb{S}^2}(\beta'')=\beta''+\beta$ and $\kappa_g$ is the geodesic curvature of $\beta$ with respect to the unit sphere.

The spherical curve associated with  a circular cone is a small circle described by the equation $\langle\beta,\mathbf{d}\rangle=\mbox{const.}$, which gives $\langle\beta',\mathbf{d}\rangle=0$. Then, taking the derivative of $f(t)=\langle\xi,\mathbf{d}\rangle$ leads to $f'=-\kappa_g\langle\beta',\mathbf{d}\rangle=0$. Therefore, $f=\mbox{const.}$ and, consequently, a circular cone makes a constant angle with a fixed direction. 

Conversely, assume that the cone $\mathcal{C}_{\beta}^2$ makes a constant angle with the fixed direction $\mathbf{d}$, $\langle\xi,\mathbf{d}\rangle=\mbox{constant}$. We have $0=\langle\xi',\mathbf{d}\rangle=-\kappa_g\langle\beta',\mathbf{d}\rangle$. If $\kappa_g=0$, then $\beta$ is a great circle and $\mathcal{C}_{\beta}^2$ is a plane, which is a helix surface. On the other hand, if $\kappa_g\not=0$, we deduce that $\langle\beta',\mathbf{d}\rangle=0\Rightarrow \langle\beta,\mathbf{d}\rangle=\mbox{const.}$ and, therefore, $\beta$ is a small circle and $\mathcal{C}_{\beta}^2$ is a circular cone.
\end{proof}

\begin{theorem}\label{thr::SlantAndRectfCurves}
A rectifying curve $\alpha:I\to\mathbb{E}^3$ is a slant helix if and only if it is the geodesic of a circular cone.
\end{theorem}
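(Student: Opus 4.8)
The plan is to combine the two characterizations already established in this section. By Theorem~\ref{ThrRecCurvAsConeGeod}, a rectifying curve $\alpha:I\to\mathbb{E}^3$ with vertex $p$ is precisely a geodesic on some cone $\mathcal{C}_\beta^2(p)$ (which is not a ruling), where $\beta$ is the unit-speed spherical curve obtained by radially projecting $\alpha$ onto $\mathbb{S}^2(p,1)$. So the remaining task is purely about \emph{which} cones force their geodesics to be slant helices. The key external fact I would invoke is that a surface is a constant angle surface (helix surface) if and only if all its geodesics are slant helices \cite{LucasBBMS2016}. This reduces the theorem to identifying the cones that are constant angle surfaces, and that identification is exactly the content of Proposition~\ref{prop::HelixConesAreCircular}.

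First I would prove the forward direction: suppose the rectifying curve $\alpha$ is a slant helix. I would realize $\alpha$ as a geodesic of the cone $\mathcal{C}_\beta^2(p)$ via Theorem~\ref{ThrRecCurvAsConeGeod}. Here I expect the main obstacle, because being a slant helix is a property of the \emph{single} curve $\alpha$, whereas a constant angle surface requires \emph{every} geodesic to be a slant helix. So I cannot simply read off ``$\alpha$ slant helix $\Rightarrow$ cone is helix surface'' from the cited equivalence. The natural way around this is to use the homogeneity of the cone under scaling: every geodesic of $\mathcal{C}_\beta^2(p)$ is, up to the secant reparameterization in Theorem~\ref{thr::CharRectCurvesUsingTangentialComponent}, built from the \emph{same} spherical curve $\beta$, so the geodesic curvature data that determines the slant-helix property is governed by $\beta$ alone and is shared by all geodesics on that cone. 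Concretely I would show that the condition ``$\alpha$ makes a constant angle with a fixed direction $\mathbf{d}$'' transfers to the cone's unit normal $\xi|_\beta=\beta\times\beta'$ making a constant angle with $\mathbf{d}$, using that $\alpha''$ is normal to the cone (it is a geodesic) and hence parallel to $\xi$ along $\alpha$. Once $\langle\xi,\mathbf{d}\rangle$ is constant, Proposition~\ref{prop::HelixConesAreCircular} immediately gives that $\mathcal{C}_\beta^2(p)$ is a circular cone.

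For the converse, I would start from a circular cone. By Proposition~\ref{prop::HelixConesAreCircular} it is a constant angle surface, so by the equivalence in \cite{LucasBBMS2016} every one of its geodesics is a slant helix; in particular the given rectifying curve $\alpha$, which is a geodesic of it by Theorem~\ref{ThrRecCurvAsConeGeod}, is a slant helix. This direction is the clean one and needs no new computation. The delicate point to get right in writing this up is the precise link between the slant-helix direction $\mathbf{d}$ for the curve and the axis of the circular cone; Proposition~\ref{prop::HelixConesAreCircular} already asserts these coincide, so I would make sure to phrase the forward direction so that the fixed direction extracted from $\alpha$ is genuinely the one feeding into that proposition. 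The overall argument is therefore short, with essentially all the analytic work already isolated in Proposition~\ref{prop::HelixConesAreCircular} and in the cited geodesic/slant-helix equivalence; the only real care is in justifying the passage between a property of $\alpha$ and a property of the ambient cone.
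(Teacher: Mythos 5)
Your proposal is correct and follows essentially the same route as the paper: both directions rest on identifying the principal normal of the geodesic with the cone's unit normal, transferring the constant-angle condition from the single curve $\alpha$ to the whole cone via the parallelism of the normal along the rulings, and then invoking Proposition~\ref{prop::HelixConesAreCircular}. The only cosmetic difference is in the converse, where the paper argues directly that the geodesic's principal normal inherits the constant angle from the circular cone's normal instead of citing the general geodesic/slant-helix equivalence for constant angle surfaces from \cite{LucasBBMS2016}.
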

\begin{proof}
The principal normal of a rectifying curve coincides with the cone normal since it is a geodesic. Thus, if the associated cone is circular, the curve will make a constant angle with a fixed direction, the axis of the cone. Therefore, any circular rectifying is a slant helix.

Conversely, if $\alpha$ is a rectifying  slant helix, then the unit normal of the cone $\mathcal{C}^2(p)$ containing $\alpha$ has to make a constant angle with a fixed direction along $\alpha$. For cones, the unit normal is parallel transported along the rulings and, consequently, the portion of $\mathcal{C}^2(p)$ given by $r(u,s) = p + u (\alpha(s)-p)$, $u \in [0,1]$,
should be a circular cone according to Proposition \ref{prop::HelixConesAreCircular}. Thus, a rectifying curve which is also a slant helix has to be circular rectifying and, in addition, the fixed direction is nothing but the axis of the corresponding circular cone.
\end{proof}

In \cite{LucasBBMS2016} it is noted that the geodesics of circular cones provide examples of rectifying slant helices. We just showed that this is a characteristic property. Now, we address the same problem in $\mathbb{E}^{m+2}$. Mimicking Eq. (\ref{eq::SphericalRepOfCones}), we can define {\emph{$n$-cones}} by taking $\beta$ as the parameterization of an $(n-1)$-dimensional submanifold of the unit sphere $\mathbb{S}^{m+1}(p,1)$. A $n$-cone is said to be \emph{circular} if $\langle\beta,\mathbf{d}\rangle=\mbox{const.}$ for some fixed vector $\mathbf{d}$.

\begin{proposition}
A circular hypercone $\mathcal{C}_{\beta}^{m+1}(p)\subset\mathbb{E}^{m+2}$ makes a constant angle with a fixed direction. In addition, the fixed direction coincides with the axis of the circular {hypercone}.
\end{proposition}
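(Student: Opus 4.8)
The plan is to mimic the structure of the proof of Proposition~\ref{prop::HelixConesAreCircular}, replacing the spherical Frenet equations (which are special to curves) by the analogous Weingarten-type relation for the unit normal of the spherical submanifold $\beta$. After translating so that $p=0$, the circular hypercone is $C_\beta(t_1,\dots,t_m,u)=u\,\beta(t_1,\dots,t_m)$ with $\langle\beta,\beta\rangle=1$. First I would identify its unit normal $\xi$: the tangent space is spanned by $\partial_uC_\beta=\beta$ and $\partial_{t_i}C_\beta=u\,\partial_{t_i}\beta$, so $\xi$ is characterized by $\langle\xi,\beta\rangle=0$ and $\langle\xi,\partial_{t_i}\beta\rangle=0$ for all $i$. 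In other words, $\xi$ is precisely the unit normal of $\beta$ regarded as a hypersurface of the unit sphere $\mathbb{S}^{m+1}$, and---exactly as in the cone arguments already used in this work---it is parallel transported along the rulings, hence depends only on $(t_1,\dots,t_m)$.

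The proof rests on two facts. The first is the circular hypothesis: differentiating $\langle\beta,\mathbf{d}\rangle=\mathrm{const}$ in each parameter gives $\langle\partial_{t_i}\beta,\mathbf{d}\rangle=0$, so the fixed direction $\mathbf{d}$ is orthogonal to the entire tangent space of the spherical submanifold. The second is the higher-dimensional replacement for the relation $\xi'=-\kappa_g\beta'$ used in the $2$-cone case: differentiating the identities $\langle\xi,\xi\rangle=1$ and $\langle\xi,\beta\rangle=0$ yields $\langle\partial_{t_i}\xi,\xi\rangle=0$ and $\langle\partial_{t_i}\xi,\beta\rangle=-\langle\xi,\partial_{t_i}\beta\rangle=0$, so each $\partial_{t_i}\xi$ lies in the span of $\{\partial_{t_1}\beta,\dots,\partial_{t_m}\beta\}$.

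Combining the two facts finishes the argument: $\partial_{t_i}\langle\xi,\mathbf{d}\rangle=\langle\partial_{t_i}\xi,\mathbf{d}\rangle$ is a linear combination of the vanishing quantities $\langle\partial_{t_j}\beta,\mathbf{d}\rangle$, so $\langle\xi,\mathbf{d}\rangle$ is independent of every $t_i$; since $\xi$ is also constant along the rulings, $\langle\xi,\mathbf{d}\rangle$ is constant on all of $\mathcal{C}_\beta^{m+1}(p)$, which is exactly the assertion that the hypercone makes a constant angle with $\mathbf{d}$, and $\mathbf{d}$ is by definition its axis. As an alternative to the Weingarten step, one may decompose $\mathbf{d}$ in the pointwise orthonormal frame adapted to $\beta$: since its tangential components vanish, $\mathbf{d}=\langle\mathbf{d},\beta\rangle\beta+\langle\mathbf{d},\xi\rangle\xi$, and taking norms gives $\langle\mathbf{d},\xi\rangle^2=\|\mathbf{d}\|^2-\langle\mathbf{d},\beta\rangle^2=\mathrm{const}$, whence $\langle\mathbf{d},\xi\rangle$ is constant by continuity.

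I expect the only genuine subtlety to be the global well-definedness of $\xi$: I am tacitly assuming the normal bundle of $\beta$ in $\mathbb{S}^{m+1}$ is orientable, so that a continuous unit normal exists (otherwise the statement should be read up to sign, or only locally). Once the structure relation $\partial_{t_i}\xi\in\mathrm{span}(\partial_{t_j}\beta)$ is in hand, the remaining computation is routine, which also explains why---unlike Proposition~\ref{prop::HelixConesAreCircular}---only one direction is asserted here: the converse (that a constant-angle hypercone must be circular) is the delicate point left open in the text.
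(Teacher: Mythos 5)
Your argument is correct and follows essentially the same route as the paper: identify $\xi$ as the unit normal of $\beta$ in the sphere, extended by parallel transport along the rulings, note that circularity makes $\mathbf{d}$ orthogonal to the tangent space of $\beta$, and use the Weingarten-type fact that $\partial_{t_i}\xi$ is tangent to $\beta$ to conclude that $\langle\xi,\mathbf{d}\rangle$ is constant. The paper obtains that last fact via the covariant derivative in $\mathbb{S}^{m+1}$ rather than by differentiating the orthogonality relations directly, but the content is identical.
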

\begin{proof}
First, note that the unit normal $\xi$ of $\mathcal{C}_{\beta}^{m+1}$ along $\beta(t_1,\dots,t_m)$ has to be the normal to $\beta$ with respect to the unit sphere. On the remaining points of $\mathcal{C}_{\beta}^{m+1}$, the normal is obtained through parallel transport along the rulings in $\mathbb{E}^{m+2}$. In addition, if $\nabla$ is the covariant differentiation in $\mathbb{S}^{m+1}$,  $(\nabla_XY)(p)=\frac{\partial Y}{\partial X}\vert_p+\langle X,Y\rangle\,p$, we may conclude that $\nabla_{\partial_i}\xi=\frac{\partial \xi}{\partial t_i}$, where $\partial_i$ is the velocity vector of the $i$-th coordinate curve $t_i\mapsto\beta(t_1,\dots,t_i,\dots,t_m)$.

The spherical submanifold associated with  a circular hypercone is a small sphere described by an equation $\langle\beta,\mathbf{d}\rangle=\mbox{const.}$, which gives $\langle\partial\beta/\partial t_i,\mathbf{d}\rangle=0$ for all $i\in\{1,\dots,m\}$. Taking the derivative of the angle function $f(t_1,\dots,t_m)=\langle\xi,\mathbf{d}\rangle$ leads to $\partial f/\partial t_i=\langle\nabla_{\partial_i}\xi,\mathbf{d}\rangle=0$, where we used that $\nabla_{\partial_i}\xi$ has to be a tangent vector to $\beta$ in $\mathbb{S}^{m+1}$. Thus, $f=\mbox{const.}$ and, therefore, a circular hypercone makes a constant angle with a fixed direction.
\end{proof}

\begin{corollary}\label{cor::SlantRectAsGeodCircCone}
A circular rectifying curve, i.e., a geodesic of a circular hypercone, is also a slant helix.
\end{corollary}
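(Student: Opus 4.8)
The plan is to run, in arbitrary codimension, exactly the argument that proves the direct implication of Theorem~\ref{thr::SlantAndRectfCurves}. The whole statement rests on two facts already in hand. The first is geometric and follows from the very definition of a geodesic: the acceleration $\alpha''$ of a geodesic on a hypersurface has no component tangent to that hypersurface, so along $\alpha$ the curvature vector is parallel to the hypersurface unit normal $\xi$. The second is the content of the Proposition immediately preceding this Corollary: a circular hypercone makes a constant angle with its axis, i.e. its unit normal satisfies $\langle\xi,\mathbf{d}\rangle=\text{const}$, where $\mathbf{d}$ is the axis direction. Combining the two simply transfers the constant-angle property from the hypersurface normal to the principal normal of the curve, which is precisely the slant-helix condition.

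Concretely, I would argue as follows. Let $\alpha$ be a geodesic of a circular hypercone $\mathcal{C}_{\beta}^{m+1}(p)$; by Theorem~\ref{ThrRecCurvAsConeGeod} it is rectifying, and, excluding the rulings exactly as there, I assume (as is standard in Frenet theory) that the curvature $\kappa$ does not vanish, so that the unit principal normal $\mathbf{N}=\alpha''/\Vert\alpha''\Vert$ is well defined. Since $\alpha$ is a geodesic, $\alpha''$ is orthogonal to the tangent space of $\mathcal{C}_{\beta}^{m+1}(p)$ and hence parallel to $\xi$ evaluated along $\alpha$; by continuity of the two unit fields on the set where $\kappa\neq0$ one has $\mathbf{N}=\epsilon\,\xi|_{\alpha}$ with a locally constant sign $\epsilon=\pm1$. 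Because the hypercone is circular, the preceding Proposition gives that $\langle\xi,\mathbf{d}\rangle$ is constant over the hypersurface, in particular along $\alpha$. Evaluating along the curve then yields $\langle\mathbf{N},\mathbf{d}\rangle=\epsilon\,\langle\xi,\mathbf{d}\rangle=\text{const}$, so the curvature vector of $\alpha$ makes a constant angle with the fixed direction $\mathbf{d}$, i.e. $\alpha$ is a slant helix.

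The argument is essentially immediate once the preceding Proposition is established, so there is no deep obstacle; the only points requiring care are the degenerate and sign issues. One must discard the rulings, where $\kappa\equiv0$ and $\mathbf{N}$ is undefined, and one must verify that the sign $\epsilon$ identifying $\mathbf{N}$ with $\xi|_{\alpha}$ is genuinely locally constant, which follows from the continuity of both unit vector fields on $\{\kappa\neq0\}$. It should be stressed that this Corollary only gives the implication ``circular $\Rightarrow$ slant helix''; the converse, namely whether every rectifying slant helix in $\mathbb{E}^{m+2}$ must be a geodesic of a circular hypercone, is \emph{not} obtainable by this method and—unlike the three-dimensional situation settled in Theorem~\ref{thr::SlantAndRectfCurves}—remains open, the difficulty being that the cone containing a given rectifying curve is no longer unique in higher dimensions.
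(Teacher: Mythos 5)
Your argument is correct and is exactly the one the paper intends: the corollary is stated without explicit proof as an immediate consequence of the preceding Proposition combined with the fact that a geodesic's curvature vector is parallel to the hypersurface normal, which is precisely the reasoning you spell out (and which mirrors the first paragraph of the proof of Theorem~\ref{thr::SlantAndRectfCurves}). Your added care about excluding rulings and about the local constancy of the sign $\epsilon$ is a harmless refinement of the same approach.
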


In higher dimensions, the converse of the above result is subtler. If a rectifying curve $\alpha$ is also a slant helix, then $\alpha''$ coincides with the hypercone normal $\xi$ and, in addition, it is straightforward to conclude that $\xi$ makes a constant angle with a fixed direction along the 2-cone $\Sigma_{\alpha}^2:(u,s)\mapsto p+u(\alpha(s)-p)$. (Note that any hypercone containing $\alpha$ should necessarily contain $\Sigma_{\alpha}^2$.) Then, the challenge to establish a converse is to show that it is possible to find a hypercone $\mathcal{C}^{m+1}_{\beta}$ whose associated spherical submanifold $\beta$ is a small sphere.

\section{Curves normal with respect to a Frenet vector field}
\label{sectJ-rect}

It is known that rectifying curves can be characterized in terms of the constancy of the length of its normal component \cite{CambieTJM2016,ChenMonthly2003} [see Theorem \ref{thr::CharRectCurvesUsingTangentialComponent}, item (4)]. The problem of characterizing curves normal to one of the Frenet vectors was first proposed by Cambie et al. \cite{CambieTJM2016}: here we shall call a curve \emph{$j$-rectifying} if its position vector is orthogonal to the $j$-th Frenet vector field. In this section we provide a characterization for $j$-rectifying curves in terms of the constancy of a certain normal component (Theorem \ref{Theo::CharjRectCurves}), which then generalizes the characterization of rectifying curves, or 1-rectifying in our notation. First, we need some preliminaries results.

Let $\alpha:I\to \mathbb{E}^{m+2}$ be a regular curve parameterized by arc-length. We say that $\alpha$ is a \emph{twisted curve} if it is of class $C^{m+2}$ and $\{\alpha'(s),\alpha''(s),\dots,\alpha^{(m+2)}(s)\}$ is linearly independent for all $s\in I$ \cite{Kuhnel2015}. We may associate with a twisted curve its Frenet frame $\{\mathbf{T},\mathbf{N}_1,\dots.\mathbf{N}_m,\mathbf{B}\}$ whose equations of motion in $\mathbb{E}^{m+2}$ are
\begin{equation}\label{frenetsystem}
    \left\{
    \begin{array}{ccc}
         \mathbf{T}' & = & \kappa_0 \mathbf{N}_1  \\
         \mathbf{N}_i' & = & -\kappa_{i-1} \mathbf{N}_{i-1} + \kappa_i\mathbf{N}_{i+1} \\
         \mathbf{B}' & = & - \kappa_{m}\mathbf{N}_{m}\\
    \end{array}
    \right.,\,i\in\{1,\dots,m\},
\end{equation}
where $\mathbf{N}_0=\mathbf{T}$ is the unit tangent whose derivative gives the curvature function $\kappa=\kappa_0$ and $\mathbf{N}_{m+1}=\mathbf{B}$ is the \emph{multinormal vector} whose derivative gives the torsion $\tau=\kappa_{m}$. In analogy to what happens in 3d, a hyperplane curve is characterized by $\tau\equiv0$. Moreover, if $\alpha$ is twisted, then $\kappa_i\not=0$ and $\tau\not=0$.

\begin{definition}
We say that $\alpha$ is a \emph{$j$-rectifying curve}, $j\in\{0,\dots,m+1\}$, when
\begin{equation}
\forall\,s\in I,\,\langle\alpha(s)-p,\mathbf{N}_j(s)\rangle=0\Rightarrow \alpha-p=\sum_{i=0,\, i\not=j}^{m+1}A_i(s)\mathbf{N}_i(s),
\end{equation}
where $A_{i}(s) = \langle\alpha(s)-p, N_{i}(s) \rangle$.
\end{definition}
Note that for $j=0,1$, and $m+1$ we obtain the normal, rectifying, and osculating curves, respectively. Thus, it remains to investigate  the cases where $j\in\{2,\dots,m\}$.

\begin{lemma}\label{lemmaFrenetSystemForCoord}
Let $\alpha$ be \emph{any} $C^{2}$ regular curve and $\{\mathbf{V}_0=\mathbf{T},\mathbf{V}_1,\dots,\mathbf{V}_{m+1}\}$ be \emph{any} orthonormal moving frame along $\alpha$ whose equations of motion are
\begin{equation}\label{eqODEsForMovFrame}
    \mathbf{V}_i'(s)=\sum_{j=0}^{m+1}k_{ij}(s)\mathbf{V}_j(s),\,\mbox{ where }k_{ij}=-k_{ji}.
\end{equation}
If we write $\alpha(s)-p=\sum_{i=0}^{m+1}A_i(s)\mathbf{V}_i(s)$,
then the coordinate functions $\{A_i\}_{i=0}^{m+1}$ satisfy the system of equations
\begin{equation}\label{eqODEsForCoordWithRespetMovFrame}
         A_{0}'(s) = 1+ \sum_{j=0}^{m+1} k_{0j}(s) A_{j}(s)   \mbox{ and }
         A_{i}'(s) = \sum_{j=0}^{m+1} k_{ij}(s) A_{j}(s),
\end{equation}
where $i\in \{1,\dots,m,m+1\}$. {Conversely, assume that Eq. \eqref{eqODEsForCoordWithRespetMovFrame} is satisfied for some functions $\{A_j\}_{j=0}^{m+1}$ and some prescribed coefficients $k_{ij}$, $i,j\in\{0,\dots,m+1\}$, such that $k_{ij}=-k_{ji}$ and $\kappa^2\equiv k_{01}^2+\dots+k_{0,m+1}^2>0$. Then, we can integrate the Frenet-like system of equations \eqref{eqODEsForMovFrame} to obtain a regular curve $\alpha=\int V_0(s)\rmd s$ equipped with a moving frame $\{\mathbf{V}_j\}$ (fundamental theorem of curves) and such that the functions $\{A_j\}_{j=0}^{m+1}$ are the coordinates of $\alpha$ with respect to the moving frame $\{\mathbf{V}_j\}$. Finally, the derivative of the distance function $\rho=\Vert \alpha-p\Vert$ and the tangential coordinate, $A_0$, are related by $(\rho^2)'(s)=2A_0(s)$.}
\end{lemma}
\begin{proof}
For $i=0$, we have $A_{0}'=1+\langle\alpha-p,\sum_{i=0}^{m+1} k_{0j}\mathbf{V}_j\rangle=1+ \sum_{i=0}^{m+1} k_{0j} A_{j}.$ Now, for $i\in\{1,\dots,m+1\}$, we have $A_{i}'=0+\langle\alpha-p,\sum_{j=0}^{m+1}k_{ij}(s)\mathbf{V}_j\rangle=\sum_{i=0}^{m+1} k_{ij} A_{j}$. In short, the coordinates functions  satisfy the system \eqref{eqODEsForCoordWithRespetMovFrame}.

{Conversely, assume the validity of the system of equations \eqref{eqODEsForCoordWithRespetMovFrame}. Let $\alpha=\int V_0(s)\rmd s$ be the curve obtained by integrating \eqref{eqODEsForMovFrame} and define the curve $\beta(s)=\alpha(s)-\sum_{i=0}^{m+1}A_i(s)\mathbf{V}_i(s)$. Taking the derivative of $\beta$ gives}
\begin{eqnarray}
    \beta' & = & \alpha'-\sum_{i=0}^{m+1}(A_i'\mathbf{V}_i+A_i\mathbf{V}_i')\nonumber\\
           & = & \mathbf{V}_0-(1+ \sum_{j=0}^{m+1} k_{0j} A_{j})\mathbf{V}_0-\sum_{i=1}^{m+1}\sum_{j=0}^{m+1} k_{ij} A_{j}\mathbf{V}_i-\sum_{i=0}^{m+1}A_i\sum_{j=0}^{m+1} k_{ij}\mathbf{V}_j\nonumber\\
           & = & -\sum_{i=0}^{m+1}\sum_{j=0}^{m+1} k_{ij} A_{j}\mathbf{V}_i-\sum_{i=0}^{m+1}\sum_{j=0}^{m+1} k_{ij}A_i\mathbf{V}_j=0,
\end{eqnarray}
{where for the last equality we replaced the indices $i,j$ in the second sum and then used that $k_{ij}=-k_{ji}$. Therefore, we conclude that the functions $A_0,\dots,A_{m+1}$ are the coordinates of $\alpha-p$, for some $p$ constant, with respect to the corresponding moving frame $\mathbf{V}_0=\alpha',\dots,\mathbf{V}_{m+1}$.}

{It remains to} investigate $\rho$. First, note that $k_{ij}=-k_{ji}$ follows as a result of the orthonormality of $\{\mathbf{V}_i\}$. {Now, using} that $\rho^2=\sum_{i=0}^{m+1}A_i^2$, we finally have
\begin{eqnarray}
(\rho^2)' & = & 2A_0A_0'+2\sum_{i=1}^{m+1}A_iA_i'= 2A_0(1+\sum_{i=0}^{m+1} k_{0j} A_{j})+2\sum_{i=1,j=0}^{m+1}k_{ij}A_iA_{j}\nonumber\\
 & = & 2A_0+2\sum_{i=0,j=0}^{m+1}k_{ij}A_iA_{j}= 2A_0+\sum_{i<j}(k_{ij}+k_{ji})A_iA_{j}=2A_0.
\end{eqnarray}
\end{proof}

\begin{remark}
{Usually, the fundamental theorem of curves is presented and proved for the Frenet frame, i.e., by prescribing $\{\kappa_0>0,\kappa_1,\dots,\kappa_{m}\}$ we can integrate the Frenet system \eqref{frenetsystem} to obtain a regular curve, up to rigid motions, whose curvatures are precisely $\{\kappa_j\}$, e.g., see \cite{Kuhnel2015}. (If we drop the restriction $\kappa_0>0$, we still have existence but we can no longer guarantee uniqueness up to rigid motions.) It is not difficult to see that the same proof can be adapted for any other moving frame with curvature coefficients $k_{ij}$ as in Eq. \eqref{eqODEsForMovFrame}. For example, another possibility is given by the so-called rotation minimizing frames \cite{BishopMonthly1975,Etayo2016} that will be used in Section \ref{secRMrectifyingCurves}.}
\end{remark}

Equipping a curve with its Frenet frame and, in addition, taking into account that a curve is $j$-rectifying when its $j$-th coordinate function $A_j$ vanishes, the following result holds.

\begin{corollary}\label{Cor::FrenetSystemForCoord}
Let $\alpha$ be \emph{any} $C^{m+2}$ regular curve and $\{A_i\}_{i=0}^{m+1}$ be the coordinate functions with respect to the Frenet frame $\{\mathbf{N}_i\}_{i=0}^{m+1}$. Then, the coefficients $\{A_i\}$ satisfy the Frenet-like system of equations
\begin{equation}\label{eqODEsForCoordjRectifying}
   \left\{
    \begin{array}{ccc}
         A_{0}'(s) & = & 1+ \kappa(s) A_{1}(s)   \\[3pt]
         A_{i}'(s) & = & -\kappa_{i-1}(s)A_{i-1}(s)+\kappa_{i}(s)A_{i+1}(s) \\[3pt]
         A_{m+1}'(s) & = & -\tau(s) A_{m}(s)\\
    \end{array}
    \right.,\,i\in \{1,\dots,m\}.
\end{equation}
Moreover, if $\alpha$ is a $j$-rectifying curve, then we have the additional equations
\begin{equation}
    A_{j-1}'=-\kappa_{j-2}A_{j-2},\,A_{j+1}'=\kappa_{j+1}A_{j+2},\mbox{ and }-\kappa_{j-1}A_{j-1}+\kappa_{j}A_{j+1}=0.
\end{equation}
\end{corollary}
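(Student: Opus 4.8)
The plan is to obtain this corollary as a direct specialization of Lemma \ref{lemmaFrenetSystemForCoord}, so that no new analytic input is needed. First I would take the orthonormal moving frame of the Lemma to be the Frenet frame, $\mathbf{V}_i=\mathbf{N}_i$, and read off its connection coefficients $k_{ij}$ from the Frenet equations of motion. The Frenet matrix is tridiagonal and skew-symmetric: the only nonzero entries are $k_{01}=\kappa_0=\kappa$ and $k_{i,i+1}=\kappa_i$ for $i\in\{1,\dots,m\}$ (with $k_{m,m+1}=\kappa_m=\tau$), together with their skew-symmetric partners $k_{j,i}=-k_{i,j}$. In particular the hypothesis $k_{ij}=-k_{ji}$ of the Lemma is satisfied, so I may legitimately substitute these coefficients into the general system \eqref{eqODEsForCoordWithRespetMovFrame}.

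Second, because the matrix $(k_{ij})$ is tridiagonal, each sum in \eqref{eqODEsForCoordWithRespetMovFrame} collapses to at most two surviving terms. The $i=0$ row retains only $k_{01}A_1$ and yields $A_0'=1+\kappa A_1$; each interior row $i\in\{1,\dots,m\}$ retains $k_{i,i-1}A_{i-1}+k_{i,i+1}A_{i+1}=-\kappa_{i-1}A_{i-1}+\kappa_i A_{i+1}$; and the $i=m+1$ row retains only $k_{m+1,m}A_m=-\tau A_m$. This is precisely the Frenet-like system \eqref{eqODEsForCoordjRectifying}.

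Finally, for the $j$-rectifying case I would use that, by definition, the $j$-th coordinate function vanishes identically, $A_j\equiv 0$, and hence $A_j'\equiv 0$ by smoothness. Substituting $A_j=0$ into the three rows of the system that involve the index $j$ produces the three stated relations: the row for $A_{j-1}'$ loses its $\kappa_{j-1}A_j$ term and becomes $A_{j-1}'=-\kappa_{j-2}A_{j-2}$; the row for $A_{j+1}'$ loses its $-\kappa_j A_j$ term and becomes $A_{j+1}'=\kappa_{j+1}A_{j+2}$; and the row for $A_j'$ reads $0=A_j'=-\kappa_{j-1}A_{j-1}+\kappa_j A_{j+1}$, giving the algebraic constraint $-\kappa_{j-1}A_{j-1}+\kappa_j A_{j+1}=0$.

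I do not expect a genuine obstacle, since the argument is a mechanical specialization followed by one substitution. The only place demanding care is the index bookkeeping at the endpoints of the admissible range $j\in\{2,\dots,m\}$: when $j=m$, the row for $A_{j+1}'=A_{m+1}'$ is governed by the multinormal equation $A_{m+1}'=-\tau A_m$, which still vanishes because $A_m=0$, and the stated formula $A_{j+1}'=\kappa_{j+1}A_{j+2}$ should then be read with the boundary convention $\kappa_{m+1}=0$. Checking these edge cases, and confirming that the skew-symmetric tridiagonal structure of the Frenet matrix is faithfully transcribed into the $k_{ij}$, is all that the proof requires.
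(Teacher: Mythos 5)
Your proposal is correct and follows exactly the route the paper takes: the paper states this corollary without a separate proof, noting only that it follows by equipping the curve with its Frenet frame (so that Lemma \ref{lemmaFrenetSystemForCoord} applies with the tridiagonal skew-symmetric coefficients $k_{i,i+1}=\kappa_i$) and then using $A_j\equiv 0$ for the $j$-rectifying case. Your extra remarks on the endpoint conventions are a sensible sanity check but add nothing beyond the paper's implicit argument.
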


\begin{lemma}\label{LemNoJandJplus1RectCurv}
Let $\alpha:I\to\mathbb{E}^{m+2}$ be a regular twisted curve and $\{A_i\}_{i=0}^{m+1}$ be the coordinate functions with respect to its Frenet frame. Then, $\alpha$ can not be simultaneously a $j$- and a $(j+1)$-rectifying curve for any $j$.
\end{lemma}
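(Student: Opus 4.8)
The plan is to argue by contradiction, exploiting the linear Frenet-like recursion for the coordinate functions established in Corollary~\ref{Cor::FrenetSystemForCoord}. Suppose, toward a contradiction, that some twisted $\alpha$ is simultaneously $j$- and $(j+1)$-rectifying for some $j\in\{0,\dots,m\}$. By the definition of a $j$-rectifying curve this means precisely that $A_j\equiv 0$ and $A_{j+1}\equiv 0$ on all of $I$, and hence $A_j'\equiv A_{j+1}'\equiv 0$ as well. The crucial structural feature I would use is that the generic (middle) line of \eqref{eqODEsForCoordjRectifying}, $A_i'=-\kappa_{i-1}A_{i-1}+\kappa_i A_{i+1}$, can be solved for $A_{i-1}$ whenever $A_i$ and $A_{i+1}$ are already known to vanish: it then reduces to $\kappa_{i-1}A_{i-1}\equiv 0$, and since $\alpha$ is twisted every $\kappa_{i-1}$ is nowhere zero, forcing $A_{i-1}\equiv 0$.

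First I would run this observation as a downward cascade. Starting from the vanishing pair $A_j\equiv A_{j+1}\equiv 0$, the equation for $A_j'$ yields $A_{j-1}\equiv 0$; now $A_{j-1}\equiv A_j\equiv 0$ is again a consecutive vanishing pair, so the equation for $A_{j-1}'$ yields $A_{j-2}\equiv 0$, and so on. Because the middle line of \eqref{eqODEsForCoordjRectifying} is valid for every index in $\{1,\dots,m\}$ and our starting index satisfies $j\le m$, this cascade never leaves the admissible range and descends all the way to $A_1\equiv 0$ and finally $A_0\equiv 0$.

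The contradiction then comes entirely from the single inhomogeneous equation of the system. The tangential coordinate obeys $A_0'=1+\kappa_0 A_1$; but $A_0\equiv 0$ forces $A_0'\equiv 0$, while $A_1\equiv 0$ makes the right-hand side equal to $1$, so $0=1$. (For the boundary value $j=0$ no cascade is needed, since $A_0\equiv A_1\equiv 0$ already produces this contradiction directly.) I expect the only delicate point to be the index bookkeeping: one must verify at each step that the line of \eqref{eqODEsForCoordjRectifying} being inverted is a genuine middle line rather than the exceptional equation for $A_0'$ or $A_{m+1}'$, and that the division by $\kappa_{i-1}$ is legitimate---which is exactly where the twistedness hypothesis $\kappa_{i-1}\neq 0$ is indispensable. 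Once this is checked, the inhomogeneous term in $A_0'$ does all the real work.
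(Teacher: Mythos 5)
Your proof is correct, and its engine --- the downward cascade $A_j\equiv A_{j+1}\equiv 0 \Rightarrow A_{j-1}\equiv 0$, obtained by solving the middle line of \eqref{eqODEsForCoordjRectifying} for $A_{i-1}$ and dividing by the nowhere-vanishing $\kappa_{i-1}$ --- is exactly the mechanism the paper uses. Where you diverge is in the endgame. The paper stops its descending recursion at $A_1\equiv 0$ (keeping $A_0=s+b$), additionally runs the symmetric \emph{upward} cascade to kill $A_{j+2},\dots,A_{m+1}$, and then reads off that $\alpha-p=(s+b)\mathbf{T}$, so that $\alpha$ would be a straight line and hence not twisted. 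You instead push the descent one step further: from $A_1\equiv A_2\equiv 0$ the equation $A_1'=-\kappa_0A_0+\kappa_1A_2$ forces $A_0\equiv 0$, after which the single inhomogeneous equation $A_0'=1+\kappa_0A_1$ yields $0=1$. Your version is slightly more economical --- no upward cascade and no geometric straight-line argument are required, and the contradiction is purely algebraic rather than resting on the (mildly implicit) step that $\alpha-p=(s+b)\mathbf{T}$ entails $\kappa_0\equiv 0$. Both routes use twistedness in the same essential way, namely to divide by the curvatures, and your bookkeeping of the boundary cases ($j=0$ handled directly, $j\le m$ so the pair of indices stays in range) is correct.
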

\begin{proof}
Assume that $\alpha$ is both $j$- and $(j+1)$-rectifying for some $j$. Then, it follows that $0=A_j'=-\kappa_{j-1}A_{j-1}+\kappa_{j}A_{j+1}$. Now, since $\alpha$ is also $(j+1)$-rectifying and  $\kappa_{j-1}\not=0$ ($\alpha$ twisted), we have $A_{j-1}=0$. Thus, $\alpha$ is also $(j-1)$-rectifying. By recursion, we would deduce that $\alpha$ is 1-, 2-,$\dots$, and $(j-1)$-rectifying. (Note that $A_0'=1\Rightarrow A_0=s+b$.) Analogously, from $A_{j+1}=0$, we also have $0=-\kappa_{j}A_{j}+\kappa_{j+1}A_{j+2}=\kappa_{j+1}A_{j+2}$ and, consequently, $A_{j+2}=0$. In short, if $\alpha$ were $j$- and $(j+1)$-rectifying for some $j$ we would deduce that all $A_i$ vanish except for $A_0$, which implies $\alpha=p+(s-a)\mathbf{T}$. Thus $\alpha$ would be a straight line, which  is {not} twisted. 
\end{proof}

Now, we provide a proof for the main theorem of this section characterizing $j$-rectifying curves, which should be thought as the generalization of item (4) of  Theorem \ref{thr::CharRectCurvesUsingTangentialComponent} to this new context.

\begin{theorem}\label{Theo::CharjRectCurves}
Let $\alpha:I\to\mathbb{E}^{m+2}$ be a regular curve of class $C^{m+2}$. Then, $\alpha$ is $j$-rectifying, {$j\in\{0,\dots,m\}$,} if and only if the normal vector field $$\alpha^{N_j}\equiv\sum_{i=j+1}^{m+1}\langle\alpha-p,\mathbf{N}_i\rangle\mathbf{N}_i$$ has constant length.
\end{theorem}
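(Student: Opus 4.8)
The plan is to forget the geometry and work entirely with the coordinate functions $A_i=\langle\alpha-p,\mathbf{N}_i\rangle$ governed by the tridiagonal skew system of Corollary \ref{Cor::FrenetSystemForCoord}, so that the condition ``$\Vert\alpha^{N_j}\Vert$ is constant'' collapses to a single scalar identity. Since $\Vert\alpha^{N_j}\Vert^2=\sum_{i=j+1}^{m+1}A_i^2$, everything hinges on computing the derivative of this one sum. (Throughout I use that $\alpha$ is twisted, so that the Frenet frame and the $A_i$ are defined and $\kappa_i\neq0$.)

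The first step is to differentiate and exploit the skew-tridiagonal structure. Setting $P_i=\kappa_iA_iA_{i+1}$ and using $A_i'=-\kappa_{i-1}A_{i-1}+\kappa_iA_{i+1}$ for $1\le i\le m$ together with $A_{m+1}'=-\tau A_m=-\kappa_mA_m$, each term becomes $A_iA_i'=P_i-P_{i-1}$ for $i\le m$ while $A_{m+1}A_{m+1}'=-P_m$. Summing from $i=j+1$, the interior terms telescope to $P_m-P_j$ and the top term cancels $P_m$, leaving
\[
\tfrac{1}{2}\bigl(\Vert\alpha^{N_j}\Vert^2\bigr)'=\sum_{i=j+1}^{m+1}A_iA_i'=-P_j=-\kappa_j\,A_j\,A_{j+1}.
\]
This telescoping identity is the computational core of the argument; it is the exact analogue, shifted one index up, of the relation $(\Vert\alpha^{N}\Vert^2)'=-2\kappa_0A_0A_1$ underlying item (4) of Theorem \ref{thr::CharRectCurvesUsingTangentialComponent}.

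The forward implication is then immediate: if $\alpha$ is $j$-rectifying, then $A_j\equiv0$, the right-hand side vanishes, and $\Vert\alpha^{N_j}\Vert$ is constant. For the converse I would combine constancy of $\Vert\alpha^{N_j}\Vert$ with twistedness ($\kappa_j\neq0$) to deduce the pointwise product identity $A_jA_{j+1}\equiv0$ on all of $I$, and it then remains to upgrade this to $A_j\equiv0$.

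The main obstacle is precisely this last upgrade. A priori $A_j$ could vanish on one subinterval and $A_{j+1}$ on another; worse, a genuinely $(j+1)$-rectifying curve has $A_{j+1}\equiv0$ and also makes $\Vert\alpha^{N_j}\Vert$ constant, so the bare product condition does not by itself isolate the $j$-rectifying case. My plan is to resolve this exactly as item (4) of Theorem \ref{thr::CharRectCurvesUsingTangentialComponent} resolves its analogous ambiguity through the clause ``$\rho$ non-constant'': on the open set where $A_{j+1}\neq0$ the identity forces $A_j=0$, and provided $A_{j+1}$ does not vanish on any subinterval this open set is dense, whence $A_j\equiv0$ by continuity. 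Establishing that non-vanishing — equivalently, excluding the competing $(j+1)$-rectifying branch, for which Lemma \ref{LemNoJandJplus1RectCurv} at least shows that $j$- and $(j+1)$-rectifying cannot coexist — is where the real difficulty lies; I would either carry it as a non-degeneracy hypothesis paralleling ``$\rho$ non-constant'' or extract it from the Frenet system. Everything else is the routine bookkeeping of the telescoping sum.
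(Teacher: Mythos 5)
Your telescoping identity $\tfrac{1}{2}\bigl(\Vert\alpha^{N_j}\Vert^2\bigr)'=-\kappa_jA_jA_{j+1}$ is exactly the computation the paper performs: it reaches it directly in the forward direction, and in the converse by writing $\rho^2=\sum_{i=0}^jA_i^2+\rho_j^2$ and comparing with $(\rho^2)'=2A_0$ from Lemma \ref{lemmaFrenetSystemForCoord} -- the two routes are algebraically the same. Your forward implication is therefore complete and correct. The problem is that your converse stops before the finish line: you derive $A_jA_{j+1}\equiv0$ and then explicitly defer the upgrade to $A_j\equiv0$, offering only to ``carry it as a non-degeneracy hypothesis'' or to ``extract it from the Frenet system''. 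That deferred step is the only nontrivial content of the converse, so as written the proposal does not establish the stated equivalence.

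For the record, the paper's way out is the one you half-identify: it assumes without loss of generality that $A_{j+1}\not\equiv0$, arguing that otherwise $\rho_j=\rho_{j+1}$ and one may exchange the roles of $j$ and $j+1$, with Lemma \ref{LemNoJandJplus1RectCurv} guaranteeing that a twisted curve cannot be simultaneously $j$- and $(j+1)$-rectifying; under that assumption it concludes $A_j=0$ from $\kappa_jA_jA_{j+1}=0$. You are right that this is delicate: $A_{j+1}\not\equiv0$ only makes $\{A_{j+1}\neq0\}$ a nonempty open set, not a dense one, so continuity alone does not force $A_j\equiv0$, and a genuinely $(j+1)$-rectifying curve does make $\Vert\alpha^{N_j}\Vert$ constant without being $j$-rectifying. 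Your diagnosis of where the difficulty sits is accurate -- arguably sharper than the paper's own treatment -- but a proof must resolve the difficulty, not merely locate it. To complete the argument you would need either to incorporate the paper's exchange convention explicitly (so that the degenerate branch is absorbed into the $(j+1)$-rectifying case), or to supply the missing rigidity statement, e.g.\ via analyticity or a uniqueness argument for the Frenet ODE system showing that $A_{j+1}$ cannot vanish on a subinterval without vanishing identically.
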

\begin{proof}
Let $\alpha$ be $j$-rectifying, i.e., $A_j=0$. Since $\rho_{j}^2\equiv\langle\alpha^{N_j},\alpha^{N_j}\rangle=\sum_{i=j+1}^{m+1}A_i^2$, taking the derivative gives 
\begin{eqnarray*}
(\rho_{j}^2)' &=& 2\sum_{i=j+1}^{m}(-\kappa_{i-1}A_{i-1}A_i+\kappa_iA_iA_{i+1})+2A_{m+1}A_{m+1}'\\
&=& 2(-\kappa_{j}A_{j}A_{j+1}+\tau A_mA_{m+1})-2\tau A_{m}A_{m+1}=0.
\end{eqnarray*}
Therefore, $\alpha^{N_j}$ has constant length. 

Conversely, let $\rho_{j}$ be constant. We may assume, without loss of generality, that $A_{j+1}\not\equiv0$, otherwise $\rho_{j}=\rho_{j+1}$ and we can exchange $j$ and $j+1$ (see Lemma \ref{LemNoJandJplus1RectCurv}). We can write $\alpha-p$ as
\begin{equation}
    \alpha(s)-p=\sum_{i=0}^jA_i\mathbf{N}_i+\alpha^{N_j}\Rightarrow \rho^2 = \sum_{i=0}^jA_i^2+\rho_{j}^2.
\end{equation}
Taking the derivative, and using Corollary \ref{Cor::FrenetSystemForCoord},
\begin{eqnarray}
    (\rho^2)' & = & 2\sum_{i=0}^jA_iA_i'+0= 2A_0(1+\kappa A_1)+2\sum_{i=1}^j(-\kappa_{i-1}A_{i-1}A_i+\kappa_{i}A_iA_{i+1})\nonumber\\
    & = & 2A_0 +2\kappa A_0A_1+2(-\kappa A_0A_1+\kappa_jA_jA_{j+1}) \nonumber\\
    &=& 2A_0+2\kappa_jA_jA_{j+1}.
\end{eqnarray}
Since $(\rho^2)'=2A_0$ (Lemma \ref{lemmaFrenetSystemForCoord}), it follows that $\kappa_jA_jA_{j+1}=0$ and, consequently, $A_j=0$. In other words, $\alpha$ is a $j$-rectifying curve.
\end{proof}

\begin{remark}
The definition of $j$-rectifying curves only requires the curve to be of class $C^{j+1}$ since the Frenet frame is defined in such a way  that $V_j\equiv {\mathrm{span}}   \{\mathbf{T},\mathbf{N}_1,\dots,\mathbf{N}_j\}= {\mathrm{span}} \{\alpha',\alpha'',\dots,\alpha^{(j+1)}\}$ \cite{Kuhnel2015}. Therefore, once we equip a $j$-rectifying curve with the first $j+1$ Frenet vectors, we could later choose any set of $m-j+1$ orthonormal vector fields spanning $V_j^{\perp}$ to complete a frame along $\alpha$ and then provide a proof entirely analogous to the above. 
\end{remark}

\section{A formal correspondence between spherical and rectifying curves}
\label{sectMapSphrclAndRectfyngCrv}

In $\mathbb{E}^3$, in addition to the characterization of rectifying curves in terms of $\Vert\alpha^N\Vert=\mbox{constant}$, Chen showed that $\alpha$ is rectifying if and only if $\frac{\tau}{(s+b)\kappa}=\frac{1}{a}$, for some constants $a$ and $b$ \cite{ChenMonthly2003}. Item (iv) of Chen's Theorem 1 \cite{ChenMonthly2003} was later extended to higher dimensional spaces in Theorem 4.1 of \cite{CambieTJM2016}, see also \cite{IlarslanTJM2008} for a proof in $4d$, involving the remaining curvature functions. In this section, we show that these equations for the curvatures and torsion of a rectifying curve in $\mathbb{E}^{m+2}$ allow us to establish a correspondence with spherical curves in $\mathbb{E}^{m+1}$ (see Theorem \ref{TheoMapSpherCurveInRectCurve}). To illustrate this, {if $\kappa_0=\kappa$ and $\kappa_1=\tau$ denote the curvature and torsion of a rectifying curve in $\mathbb{E}^3$, then we can establish the following correspondence with circles in $\mathbb{E}^2$ of curvature $k_0=1/a$ given by}
\begin{equation}
 {    (\kappa_0,\kappa_1) \mapsto k_0(\kappa_0,\kappa_1)\equiv\frac{\kappa_1}{(s+b)\kappa_0}=\frac{1}{a}.
 }
\end{equation}
{The existence of the associated curve with curvature $k_0=1/a$ is then guaranteed by the fundamental theorem of curves in $\mathbb{E}^2$.}

Analogously,  rectifying curves in $\mathbb{E}^4$ 
are characterized by the equation \cite{CambieTJM2016,IlarslanTJM2008} (note our notation is a bit different: $\kappa_0=\kappa_1,\kappa_1=\kappa_2$, and $\tau=\kappa_3$) 
\begin{equation}\label{eq::DEqFor4dRectCurves}
   \frac{(s+b)\kappa_0}{\kappa_1}\tau+\frac{\rmd}{\rmd s}\left\{\frac{1}{\tau}\frac{\rmd}{\rmd s}\left[\frac{(s+b)\kappa_0}{\kappa_1}\right]\right\}=0. 
\end{equation}
Consequently, we may establish a formal correspondence {of rectifying curves in $\mathbb{E}^4$ with  spherical curves in $\mathbb{E}^3$ of curvature $k_0$ and torsion $k_1$ given by}
\begin{equation}
 {
 (\kappa_0,\kappa_1,\tau) \mapsto (k_0(\kappa_0,\kappa_1,\tau),k_1(\kappa_0,\kappa_1,\tau))\equiv\left(\frac{\kappa_1}{(s+b)\kappa_0},\tau\right).
 }
\end{equation}
Indeed, spherical curves in $\mathbb{E}^3$ are characterized by $\frac{k_1}{k_0}+[\frac{1}{k_1}(\frac{1}{k_0})']'=0$ \cite{DaSilvaMJM2018,Kuhnel2015}, which is equivalent to Eq. (\ref{eq::DEqFor4dRectCurves}) under the correspondence above. {We emphasize that by formal correspondence we mean that the existence of the associated curve is guaranteed by the fundamental theorem of curves in $\mathbb{E}^3$ and not necessarily by some sort of explicit construction. (The problem of finding an explicit construction is left as an open problem for the reader.)} 

The next theorem states that {the existence of correspondences as above} is a general feature {of} rectifying curves.

\begin{theorem}\label{TheoMapSpherCurveInRectCurve}
Let {$\{k_i\}_{i=0}^{m-1}$ and $\{\kappa_i\}_{i=0}^{m}$} denote the curvatures and torsion of regular curves in $\mathbb{E}^{m+1}$ and $\mathbb{E}^{m+2}$, respectively, then the correspondence 
\begin{equation}
{
    (\kappa_0,\dots,\kappa_m)\mapsto(k_0,k_1,\dots,k_{m-1})\equiv\left(\frac{\kappa_1}{(s+b)\kappa_0},\kappa_2,\dots,\kappa_m\right)
    }
\end{equation}
formally maps {rectifying curves in $\mathbb{E}^{m+2}$ into spherical curves in $\mathbb{E}^{m+1}$. Conversely, given a smooth function $f_0(s)>0$ and a constant $b$, then the correspondence} 
\begin{equation}
{
    (k_0,\dots,k_{m-1})\mapsto(\kappa_0,\kappa_1,\dots,\kappa_m)\equiv\Big(f_0,(s+b)k_0f_0,k_1,\dots,k_{m-1}\Big)
    }
\end{equation}
{formally maps spherical curves in $\mathbb{E}^{m+1}$ into rectifying curves in $\mathbb{E}^{m+2}$.} In addition, if $\{C_i\}$ and $\{A_i\}$ are respectively the coordinate functions of regular spherical and rectifying curves in $\mathbb{E}^{m+1}$ and $\mathbb{E}^{m+2}$ with respect to the their Frenet frames, then $(C_0,C_1)=(0,-\frac{1}{k_0})$ and $(A_0,A_1,A_2)=(s+b,0,\frac{(s+b)\kappa_0}{\kappa_1})$ and the remaining coordinate functions are related by
\begin{equation}
    (C_2,\dots,C_{m})\leftrightarrow (A_3,\dots,A_{m+1}).
\end{equation}
\end{theorem}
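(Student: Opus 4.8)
The plan is to reduce the claim to the verification that the two Frenet-type systems governing the coordinate functions coincide, term by term, under the proposed substitution. First I would record the low-order coordinates of each class. For a spherical curve in $\mathbb{E}^{m+1}$ centred at its center $p$, the distance $\rho$ is constant, so Lemma \ref{lemmaFrenetSystemForCoord} gives $0=(\rho^2)'/2=C_0$; substituting $C_0\equiv0$ into the equation $C_0'=1+k_0C_1$ then forces $C_1=-1/k_0$. For a rectifying curve in $\mathbb{E}^{m+2}$, the defining condition is $A_1=\langle\alpha-p,\mathbf{N}_1\rangle=0$, while Theorem \ref{thr::CharRectCurvesUsingTangentialComponent}(2) gives $A_0=\langle\alpha-p,\mathbf{T}\rangle=s+b$; feeding these into the equation $A_1'=-\kappa_0A_0+\kappa_1A_2=0$ from Corollary \ref{Cor::FrenetSystemForCoord} yields $A_2=(s+b)\kappa_0/\kappa_1$. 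This produces the stated initial coordinates.

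Once these are fixed, every higher coordinate is determined recursively by the off-diagonal part of the respective system, so the whole content is the comparison of
\[
C_1'=k_1C_2,\quad C_i'=-k_{i-1}C_{i-1}+k_iC_{i+1}\ (2\le i\le m-1),\quad C_m'=-k_{m-1}C_{m-1}
\]
with
\[
A_2'=\kappa_2A_3,\quad A_i'=-\kappa_{i-1}A_{i-1}+\kappa_iA_{i+1}\ (3\le i\le m),\quad A_{m+1}'=-\kappa_mA_m.
\]
I would then impose the curvature correspondence $k_0=\kappa_1/((s+b)\kappa_0)$ and $k_j=\kappa_{j+1}$ for $j\ge1$, together with the coordinate identification $C_j=-A_{j+1}$ for $j\ge1$, and check that the spherical equation of index $i$ transforms exactly into the rectifying equation of index $i+1$. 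The junction is consistent because $C_1=-1/k_0=-(s+b)\kappa_0/\kappa_1=-A_2$ is precisely the $j=1$ instance of the identification, and the terminal equation $C_m'=-k_{m-1}C_{m-1}$ turns into $A_{m+1}'=-\kappa_mA_m$, the torsion equation. Because the substitution is an algebraic bijection for a fixed $b$, inverting it sends rectifying curves back to spherical ones and yields the ``vice-versa'' statement.

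The interior of both recursions matches trivially after the index shift $i\mapsto i+1$, so the only delicate points are the two boundaries: the junction where the special low-order coordinates meet the generic three-term recurrence — this is where the sign in $C_j=-A_{j+1}$ is forced, and a wrong sign would break the match — and the terminal equation involving the torsion, which is exactly the compatibility condition that encodes sphericity on the $\mathbb{E}^{m+1}$ side and the rectifying differential equation on the $\mathbb{E}^{m+2}$ side. To make the logical equivalence airtight I would note that sphericity is equivalent to the existence of a center $p$ with $C_0\equiv0$, and the rectifying property to $A_1\equiv0$, so that the termwise identification above directly matches the two characterizing differential equations, reproducing for $m=1,2$ the $\mathbb{E}^3$ and $\mathbb{E}^4$ formulas recalled before the statement.
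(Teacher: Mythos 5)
Your proof is correct and follows essentially the same route as the paper's: derive the two Frenet-type systems for the coordinate functions from Corollary \ref{Cor::FrenetSystemForCoord}, pin down $(C_0,C_1)$ and $(A_0,A_1,A_2)$ from the normal and rectifying conditions, and match the remaining three-term recurrences under the index shift and curvature substitution. The one point where you go beyond the paper is in tracking the sign $C_j=-A_{j+1}$ forced at the junction $C_1=-1/k_0=-(s+b)\kappa_0/\kappa_1=-A_2$; the paper's unsigned correspondence $(C_2,\dots,C_m)\leftrightarrow(A_3,\dots,A_{m+1})$ silently absorbs this, and your bookkeeping is the more precise one.
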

\begin{proof}
Let $\alpha$ be a spherical curve in $\mathbb{S}^{m}(r)\subset\mathbb{E}^{m+1}$ with coordinate functions {$\{C_i\}_{i=0}^{m+1}$ (with respect to the corresponding Frenet frame) and curvatures  $\{k_i\}_{i=0}^{m}$, where $k_0>0$ is the curvature and $k_{m}$ the torsion}. Since spherical curves can be seen as normal curves, we have $C_0=0$ and, therefore, according to Corollary \ref{Cor::FrenetSystemForCoord}, the remaining coordinates satisfy the system of equations
\begin{equation}
   \left\{
    \begin{array}{ccc}
         0 & = & 1+ k_0 C_{1}   \\
         C_{1}' & = & k_{1}C_{2} \\
         C_{i}' & = & -k_{i-1}C_{i-1}+k_{i}C_{i+1} \\
         C_{m}' & = & -k_m\, C_{m-1}\\
    \end{array}
    \right.,\,i\in \{2,\dots,m-1\}.
\end{equation}
On the other hand, the coordinate functions $\{A_i\}$ of a rectifying curve in $\alpha:I\to\mathbb{E}^{m+2}$ {(coordinates with respect to the Frenet frame of $\alpha$)} satisfy
\begin{equation}
     \left\{
    \begin{array}{ccc}
         A_{0}' & = & 1   \\
         0 & = & -\kappa_0 A_{0}+\kappa_{1}A_{2} \\
         A_{2}' & = & \kappa_{2} A_{3} \\
         A_{i}' & = & -\kappa_{i-1}A_{i-1}+\kappa_{i}A_{i+1} \\
         A_{m+1}' & = & -\tau A_{m}\\
    \end{array}
    \right.,\,i\in \{3,\dots,m\}.
\end{equation}
Comparison shows that under the associations $(C_2,\dots,C_{m})\leftrightarrow (A_3,\dots,A_{m+1})$ and {$(k_0,k_1,\dots,k_{m})\leftrightarrow\left(\frac{\kappa_1}{(s+b)\kappa_0},\kappa_2,\dots,\kappa_m\right)$, $\kappa_m=\tau$,} it is possible to establish a formal map between spherical and rectifying curves. {(Note that the arbitrary function $f_0>0$ appears because a single function $k_0$ is associated with two functions $\kappa_0$ and $\kappa_1$.)} Finally, the remaining coordinate function of the spherical curve is $C_1=-\frac{1}{k_0},$
while the two remaining coordinate functions of the rectifying curve are
$A_0=s+b$, {$A_1=0$,} and $A_2=\frac{\kappa_0}{\kappa_1}A_0=\frac{(s+b)\kappa_0}{\kappa_1}$. {The proof then follows as a consequence of the fundamental Lemma \ref{lemmaFrenetSystemForCoord}. Indeed, given the coordinates and curvatures functions we can integrate the corresponding Frenet systems of equations that will give us curves with the desired coordinates functions.}
\end{proof}

\begin{remark}
It is possible to write a single differential equation relating curvatures and torsion to characterize rectifying curves \cite{CambieTJM2016}. Under the correspondence given by the theorem above, we may then write a single differential equation characterizing spherical curves as well. Such an equation then generalizes the characterization of spherical curves in $\mathbb{E}^4$ and $\mathbb{E}^5$ given by the first named author and da Silva \cite{DaSilvaMJM2018} (see their Remark 2).
\end{remark}

Finally, there also exists a formal correspondence between $j$-rectifying curves in $\mathbb{E}^{m+2}$ and curves in $\mathbb{E}^{\,j}\times\mathbb{S}^{m-j}(r)$ for some $r>0$. Indeed, let $\alpha$ be a $j$-rectifying curve, its coordinate functions with respect to its Frenet frame satisfy the equations
$$
     \left\{
    \begin{array}{ccccc}
         A_{0}' & = & 1 + \kappa A_1 & & \\ [2pt]
         A_i' & = & -\kappa_{i-1}A_{i-1}+\kappa_iA_{i+1} &,& 1\leq i\leq j-2 \\ [2pt]
         A_{j-1}' & = & -\kappa_{j-1}A_{j-2} & & \\[2pt]
         A_{j+1}' & = & \kappa_{j+1} A_{j+2} & & \\ [2pt]
         A_{k}' & = & -\kappa_{k-1}A_{k-1}+\kappa_{k}A_{k+1} &,& j+2\leq k\leq m \\ [2pt]
         A_{m+1}' & = & -\tau A_{m} & &\\
    \end{array}
    \right.
$$
and $$0 = -\kappa_{j-1}A_{j-1}+ \kappa_{j}A_{j+1}.$$ 
The first $j$ functions $(A_0,A_1,\dots,A_{j-1})$ behave like the coordinates of a generic twisted curve in $\mathbb{E}^{\,j}$ with torsion $\kappa_{j-1}$, while the remaining coordinate functions together with $A_{0}=s+b$, i.e., $(s+b,A_{j+1},\dots,A_{m+1})$, behave like the coordinates of a rectifying curve in $\mathbb{E}^{m-j+2}$, which can be associated with a spherical curve in $\mathbb{E}^{m-j+1}$ according to Theorem \ref{TheoMapSpherCurveInRectCurve}. {We emphasize that, as in the proof of Theorem \ref{TheoMapSpherCurveInRectCurve}, the existence of curves with the desired properties comes as a consequence of the fundamental Lemma \ref{lemmaFrenetSystemForCoord}.} 

\section{Curves normal with respect to a rotation minimizing vector field}
\label{secRMrectifyingCurves}

In addition to the Frenet frame, we may equip a regular curve with the so-called rotation minimizing frames: we say that a unit $C^1$ vector field $\mathbf{V}$, normal to $\alpha'$, is \emph{rotation minimizing} (RM) if { $\mathbf{V}'(s)$ and $\mathbf{T}(s)$ are parallel} \cite{BishopMonthly1975}, i.e., if it is parallel transported with respect to the normal connection of the curve \cite{Etayo2016}. We now consider curves that always lie orthogonal to a rotation minimizing frame, a problem originally considered in 3d \cite{DaSilvaArXiv2017}, and show that this condition leads to plane and spherical curves. We say that a regular $C^2$ curve $\alpha$ is \emph{normal with respect to an RM vector field} $\mathbf{V}$ if $\langle\alpha(s)-p,\mathbf{V}(s)\rangle=0$, where $p$ is constant.

\begin{theorem}\label{ThrCharRMrectifying}
A curve is normal with respect to an RM field if and only if it is either a hyperplane or a spherical curve.
\end{theorem}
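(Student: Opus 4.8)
The plan is to prove both implications directly from the defining relation $\langle\alpha-p,\mathbf{V}\rangle=0$, using throughout the single structural fact that a rotation minimizing field has no normal component in its derivative, i.e. $\mathbf{V}'=f\,\mathbf{T}$ for some scalar function $f$ (this is precisely the statement that $\mathbf{V}$ is parallel in the normal bundle). I would also keep at hand the identity $(\rho^2)'=2\langle\alpha-p,\mathbf{T}\rangle$ from Lemma \ref{lemmaFrenetSystemForCoord}, since the dichotomy between hyperplane and sphere is governed by whether the tangential coordinate $\langle\alpha-p,\mathbf{T}\rangle$ vanishes.

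For the ``if'' direction I would treat the two geometric possibilities separately. If $\alpha$ lies in a hyperplane through $p$ with unit normal $\mathbf{n}$, then $\mathbf{n}$ is constant, hence $\mathbf{n}'=0$ is trivially tangent and $\mathbf{n}$ is RM; since $\mathbf{T}$ is tangent to the hyperplane, $\langle\alpha-p,\mathbf{n}\rangle=0$, so $\alpha$ is normal with respect to $\mathbf{n}$. If instead $\alpha$ lies on a sphere $\mathbb{S}^{m+1}(p,r)$, then $\mathbf{V}_{m+1}:=(\alpha-p)/r$ is a unit field normal to $\mathbf{T}$ (because $\langle\alpha-p,\mathbf{T}\rangle=0$ on the sphere) with $\mathbf{V}_{m+1}'=\mathbf{T}/r$ tangent, so it is RM. Completing $\mathbf{V}_{m+1}$ to a full RM frame $\{\mathbf{T},\mathbf{V}_1,\dots,\mathbf{V}_{m+1}\}$ by parallel transporting an orthonormal basis of its normal-bundle complement, each $\mathbf{V}_i$ with $i\le m$ satisfies $\langle\alpha-p,\mathbf{V}_i\rangle=r\langle\mathbf{V}_{m+1},\mathbf{V}_i\rangle=0$, so $\alpha$ is normal with respect to $\mathbf{V}_1$. (This construction uses that the normal space has dimension at least two.)

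For the ``only if'' direction I would differentiate the hypothesis. Using $\langle\mathbf{T},\mathbf{V}\rangle=0$ together with $\mathbf{V}'=f\,\mathbf{T}$,
\begin{equation*}
0=\frac{\rmd}{\rmd s}\langle\alpha-p,\mathbf{V}\rangle=\langle\mathbf{T},\mathbf{V}\rangle+\langle\alpha-p,\mathbf{V}'\rangle=f\,\langle\alpha-p,\mathbf{T}\rangle=\frac{1}{2}\,f\,(\rho^2)'.
\end{equation*}
Thus $f\,(\rho^2)'\equiv0$. If $f\equiv0$, then $\mathbf{V}$ is a constant vector $\mathbf{n}$ and $\langle\alpha-p,\mathbf{n}\rangle=0$ exhibits $\alpha$ as a hyperplane curve. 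Otherwise $f$ is not identically zero and, on the open set where $f\neq0$, one has $(\rho^2)'=0$, so $\rho$ is constant and $\alpha$ lies on a sphere centered at $p$.

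The main obstacle is turning this pointwise dichotomy into a global conclusion, since a priori $f$ could vanish on part of $I$ and not on another, making $\alpha$ spherical on one subinterval and planar on another. On a connected $I$ the open sets $\{f\neq0\}$ and $\{(\rho^2)'\neq0\}$ are disjoint; if $\{f=0\}$ has empty interior, then $(\rho^2)'=0$ on the dense set $\{f\neq0\}$ and, $\rho^2$ being of class $C^2$, continuity forces $\rho$ constant, so $\alpha$ is spherical, while $f\equiv0$ gives a hyperplane curve. The genuinely delicate situation is a $C^2$ gluing of a planar arc (where $\mathbf{V}$ is constant on a full subinterval) to a spherical arc, which I would exclude by taking $\alpha$ to be of a single type or, when available, by appealing to real-analyticity. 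This is the only place where some care beyond the formal computation is required; the remaining steps are routine.
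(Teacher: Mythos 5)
Your proof is correct and follows essentially the same route as the paper: both directions hinge on the pointwise identity $f\,\langle\alpha-p,\mathbf{T}\rangle=0$ (the paper obtains it as the vanishing of the $\mathbf{V}$-coordinate $A\lambda=0$ after expanding $\alpha-p$ in a full RM frame, you obtain it by differentiating $\langle\alpha-p,\mathbf{V}\rangle=0$ directly), followed by the same case split and the same constructions of the RM field for hyperplane and spherical curves in the converse. The one place you go beyond the paper is in flagging the local-to-global issue of a $C^2$ gluing of a planar arc to a spherical arc on a connected interval---the paper's proof passes over this dichotomy silently, so your explicit (if only partial) treatment of it is added care rather than a deviation.
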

\begin{proof}
{Let $\mathbf{V}$ be an RM vector field along $\alpha$, i.e., $\mathbf{V}'=-\lambda\mathbf{T}$. We can build an RM frame $\{\mathbf{T},\mathbf{V}_1,\dots,\mathbf{V}_m,\mathbf{V}\}$ along $\alpha$ and containing $\mathbf{V}$. Indeed, in dimension 3, it is enough to take $\mathbf{V}_1=\mathbf{V}\times\mathbf{T}$. In the general case, take any set of vector fields $\mathbf{E}_1,\dots,\mathbf{E}_m$ orthogonal to both $\mathbf{T}$ and $\mathbf{V}$ whose equations of motion can be written as $\mathbf{E}_i'=k_{i0}\mathbf{T}+\sum_{j=1}^mk_{ij}\mathbf{E}_j$. (Note, $\langle\mathbf{E}_i',\mathbf{V}\rangle=0$, since $\mathbf{V}'=-\lambda\mathbf{T}$.) If we define new vector fields $\mathbf{V}_i=\sum_{j=1}^ma_{ij}\mathbf{E}_j$, $i\in\{1,\dots,m\}$, their derivatives along $\alpha$ are given by}
\begin{equation}
    \mathbf{V}_i' = \sum_{j=1}^m[a_{ij}'\mathbf{E}_j+a_{ij}k_{j0}\mathbf{T}+a_{ij}\sum_{\ell=1}^mk_{j\ell}\mathbf{E}_{\ell}]=\sum_{j=1}^ma_{ij}k_{j0}\mathbf{T}+\sum_{j=1}^m[a_{ij}'+\sum_{\ell=1}^ma_{i\ell}k_{\ell j}]\mathbf{E}_j.
\end{equation}
{Demanding all $\mathbf{V}_i$ to be RM is equivalent to finding a solution for the linear system of ordinary differential equations $A'=-AK$, where $A=(a_{ij})$ and $K=(k_{ij})$. Therefore, prescribing initial conditions $\mathbf{V}_1(s_0),\dots,\mathbf{V}_m(s_0)$ at $\alpha(s_0)$, there exist vector fields $\{\mathbf{V}_i\}$ orthogonal to $\mathbf{T}$ and $\mathbf{V}$ for \emph{all} values of $s$ such that each $\mathbf{V}_i$ minimizes rotation along $\alpha$. (Alternatively, we could have provided a shorter proof by appealing to the fact that minimizing rotation is a parallel transport with respect to the normal connection of $\alpha$ \cite{Etayo2016}.)}

{If $\alpha$ is normal with respect to an RM field $\mathbf{V}$, then we can write $\alpha(s)-p = A(s)\mathbf{T}(s)+A_1(s)\mathbf{V}_1(s)+\cdots+A_m(s)\mathbf{V}_m(s)$, where $\{\mathbf{T},\mathbf{V}_1,\dots,\mathbf{V}_m,\mathbf{V}\}$ is an RM frame along $\alpha$ (as constructed above) whose equations of motion are $\mathbf{T}'=\sum \kappa_i\mathbf{V}_i+\lambda\mathbf{V}$,  $\mathbf{V}_i'=-\kappa_i\mathbf{T}$, and $\mathbf{V}'=-\lambda\mathbf{T}$. Taking} the derivative of $\alpha-p$ gives
\begin{equation}
    \mathbf{T} = (A'-\sum_{i=1}^mA_i\kappa_i)\,\mathbf{T}+\sum_{i=1}^m(A_i'+A\kappa_i)\,\mathbf{V}_i+A\lambda\mathbf{V}.\label{EqTangOfRMrectifying}
\end{equation}
From the coordinate of $\mathbf{V}$, we deduce that $A\lambda=0$. If $\lambda=0$, then $\mathbf{V}$ is constant and, consequently, $\alpha-p$ lies in a hyperplane orthogonal to  $\mathbf{V}$. On the other hand, if $A=0$, then $\alpha$ is a normal curve, i.e., $\alpha$ is spherical, $\langle\alpha-p,\alpha-p\rangle=R^2\Leftrightarrow\langle\alpha-p,\mathbf{T}\rangle=0$. (Note, $\{A_i\}$ are all constant: from the coordinate of {$\mathbf{V}_i$} in Eq. \eqref{EqTangOfRMrectifying}, $A_i'=0$, and are related to the radius of the sphere by  $R^2=\langle\alpha-p,\alpha-p \rangle=\sum_{i=1}^mA_i^2$.)

Conversely, if $\alpha$ is spherical, $\alpha:I\to\mathbb{S}^{m+1}(p,R)$, the normal to the sphere, $\xi=\frac{1}{R}(\alpha-p)$, is an RM vector field. We may equip $\alpha$ with an RM frame $\{\mathbf{T},\mathbf{V}_1,\dots,\mathbf{V}_m,\xi\}$. Noticing that each $\mathbf{V}_i$ has to be tangent to the sphere, we deduce that $\alpha-p$ is normal to an RM vector field. The same reasoning applies to a hyperplane curve and the vector field normal to the plane. 
\end{proof}

\section*{Acknowledgements}
LCBdS would like to thank the financial support provided by the Mor\'a Miriam Rozen Gerber fellowship for Brazilian postdocs.



\end{document}